\DeclareSymbolFont{cyrletters}{OT2}{wncyr}{m}{n}
\DeclareMathSymbol{\Sha}{\mathalpha}{cyrletters}{"58}
\newcommand{\bF}{{\mathbb{F}}}
\newcommand{\bQ}{{\mathbb{Q}}}
\newcommand{\bZ}{{\mathbb{Z}}}
\newcommand{\Bx}{{\mathbf{x}}}
\newcommand{\BB}{\mathbf{B}}
\renewcommand{\L}{{\mathcal{L}}}
  \newcommand{\N}{{\mathcal{N}}}
\renewcommand{\O}{{\mathcal{O}}}
  \newcommand{\T}{{\mathcal{T}}}
\newcommand{\Gal}{\operatorname{Gal}}
\newcommand{\Cl}{\operatorname{Cl}}
\newcommand{\ep}{\varepsilon}
\newcommand{\Frob}{\operatorname{Frob}} 
\newcommand{\coker}{\operatorname{coker}}
\newcommand{\Li}{\operatorname{Li}}
\newcommand{\rk}{\operatorname{rk}} 
\newcommand{\Ind}{\operatorname{Ind}}
\newcommand{\ol}{\overline}
\newcommand{\upchi}{{\raise.35ex\hbox{$\chi$}}}
\newtheorem{theorem}{Theorem}[section]
\newtheorem{corollary}[theorem]{Corollary}
\newtheorem{proposition}[theorem]{Proposition}
\newtheorem{lemma}[theorem]{Lemma}
\theoremstyle{definition}
\theoremstyle{remark}
\numberwithin{equation}{section}
\begin{document}

\title{On the $\zeta_3$ Pell equation}


\author{Erick Knight}
\address{Department of Mathematics \\
University of Toronto \\
Bahen Centre \\
40 St. George Street, Room 6290 \\
Toronto, Ontario, Canada \\  M5S 2E4 }
\email{eknight@math.toronto.edu}
\indent

\author{Stanley Yao Xiao}
\address{Department of Mathematics \\
University of Toronto \\
Bahen Centre \\
40 St. George Street, Room 6290 \\
Toronto, Ontario, Canada \\  M5S 2E4 }
\email{syxiao@math.toronto.edu}
\indent


\begin{abstract} Let $K = \bQ(\zeta_3)$, where $\zeta_3$ is a primitive root of unity. In this paper we study the distribution of integers $\alpha \in \O_K$ for which the norm equation $N_{K(\sqrt[3]{\alpha})/K}(\Bx) = \zeta_3$ is solvable for integers $\Bx \in \mathcal{O}_{K(\sqrt[3]{\alpha})}$. The analogous question for $\zeta_2 = -1$ is the well-known negative Pell equation. We also address the natural generalization of Stevenhagen's conjecture on the negative Pell equation in this setting. 
\end{abstract}

\maketitle

\vspace{-5mm}

\section{Introduction}
A classical question in number theory is whether there is a solution to the Pell equation $x^2 - Dy^2 = 1$ in integers $x$ and $y$ for $D>1$ a square-free integer.  While this has an affirmitave answer for all $D$, the standard proof of this fact actually shows that there is a solution to $x^2-Dy^2 = \pm 1$ and then uses that to produce a solution to $x^2-Dy^2 = 1$.  Thus, one is led to ask questions about the negative Pell equation $x^2 -Dy^2 = -1$.  Many partial results have been obtained (e.g. \cite{FK2} and \cite{CKMP}).  The question about the negative Pell equation comes down determining whether there is a unit in the ring of integers of $\mathbb{Q}(\sqrt{D})$ which has norm $-1$.  Viewed from this perspective, it becomes possible to generalize this question to other classes of cyclic extensions.  In this paper, we will be focusing on the next simplest case.\\

Let $\zeta_3$ be a primitive third root of unity, and let $K = \mathbb{Q}(\zeta_3)$.  Then, by Kummer theory, all cyclic cubic extensions of $K$ are of the form $K(\sqrt[3]{\alpha})$ for $\alpha \in K^*$.  Motivated by the previous discussion, we are interested in whether there is a unit in $K(\sqrt[3]{\alpha})$ with norm $\zeta_3$.  One appealing aspect about this problem is that some of the structural issues which arise in the negative Pell equation problem goes away. On the other hand if one replaces $3$ with a larger prime, issues concerning global units would come up as the unit group of the base field will have positive rank. If the choice of prime was irregular, issues about relative class groups would arise.\\

Readers familiar with the negative Pell equation know that there are local obstructions that need to be satisfied for there to be a chance of a solution to the equation. In particular, every odd prime dividing $D$ must be congruent to $1$ mod $4$.  A similar issue arises here: one requires that every prime $\pi$ dividing $\alpha$ to have $N_{K/\mathbb{Q}}(\pi) \equiv 1 \pmod 9$ or have $\pi$ being an associate of $1-\zeta_3$, the prime above $3$.  In this introduction, we will restrict ourselves to the set of all $\alpha$ such that $K(\sqrt[3]{\alpha})/K$ is unramified at $1-\zeta_3$, as the case where the ramification at $1-\zeta_3$ is like $K_{1-\zeta_3}(\zeta_9)/K_{1-\zeta_3}$ provides no local obstruction to there being a solution to the $\zeta_3$-Pell equation but behaves very differently from the other cases. We will discuss this issue further in Section \ref{diffs}. \\

Put 
\begin{equation} S(X) = \{\alpha | N_{K/\mathbb{Q}}(D_{K(\sqrt[3]{\alpha})/K}) < X,  N_{K/\mathbb{Q}}(\pi) \equiv 1 \pmod{9} \forall \pi|\alpha \} \end{equation} 
and
\begin{equation} S^{\zeta_3}(X) = \{ \alpha \in S(X) | \exists u \in K(\sqrt[3]{\alpha} | N_{K(\sqrt[3]{\alpha})/K}(u) = \zeta_3\}. \end{equation}
We are interested in the size $\frac{|S^{\zeta_3}(X)|}{|S(X)|}$ as $X \rightarrow \infty$. 
To state the main theorem of the paper, let 
 \begin{equation} \label{beta} \beta = \prod_{i\geq 0} \left(1-\frac{1}{3^{2i+1}} \right) = \prod_{j=1}^\infty \left(1 + 3^{-j}\right)^{-1}. \end{equation}  
 This constant is the limit of the probability that a random large symmetric matrix over $\mathbb{F}_3$ has full rank.  Our main theorem is then the following:
\begin{theorem}\label{MainTheorem}
One has the lower bound
\[\beta \leq \liminf_{X \rightarrow \infty} \frac{|S^{\zeta_3}(X)|}{|S(X)|},\]
where $\beta$ is given as in (\ref{beta}). Further, one has the upper bound
\[\limsup_{X \rightarrow\infty} \frac{|S^{\zeta_3}(X)|}{|S(X)|} \leq \frac{3}{4}.\]
\end{theorem}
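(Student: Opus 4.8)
The plan is to follow the well-trodden route from the theory of the negative Pell equation, where the solvability question is reframed in terms of the $2$-Selmer group of the relevant quadratic field and ultimately in terms of a R\'edei-type symmetric matrix over $\bF_2$; here the analogous object will be a symmetric matrix over $\bF_3$. Concretely, for $\alpha \in S(X)$ I would first translate the existence of a unit $u \in \O_{K(\sqrt[3]{\alpha})}$ with $N_{K(\sqrt[3]{\alpha})/K}(u) = \zeta_3$ into a statement about whether a certain class in a $3$-torsion group (a relative class group or a Selmer-type group attached to $K(\sqrt[3]{\alpha})/K$) lies in the image of a prescribed map. The local conditions $N_{K/\bQ}(\pi) \equiv 1 \pmod 9$ for all $\pi \mid \alpha$ are exactly what makes $\zeta_3$ a cube locally everywhere, so that the only obstruction to global solvability is this single Selmer-theoretic condition. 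Writing $\alpha = \pi_1 \cdots \pi_k$ as a product of $k$ split primes (the squarefree and split-at-each-prime case being generic), the governing object becomes a $k \times k$ symmetric matrix $M_\alpha$ over $\bF_3$ whose entries are cubic residue symbols $\left( \tfrac{\pi_i}{\pi_j} \right)_3$, with the diagonal entries determined by the $\pi_i$ individually, and the $\zeta_3$-Pell equation is solvable precisely when a specified vector lies in the column span of $M_\alpha$ — in particular it is solvable whenever $M_\alpha$ has full rank.

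For the lower bound $\beta \le \liminf |S^{\zeta_3}(X)|/|S(X)|$, I would argue that as $X \to \infty$ the entries of $M_\alpha$ equidistribute: by a Chebotarev/large-sieve argument (an equidistribution statement for cubic residue symbols among primes of $\O_K$, in the style of the work on spins of prime ideals and on R\'edei symbols) the off-diagonal entries $\left( \tfrac{\pi_i}{\pi_j} \right)_3$ behave like independent uniform elements of $\bF_3$ once one fixes the diagonal and the number $k$ of prime factors, and $k$ itself tends to infinity (Erd\H{o}s--Kac). Hence the probability that $M_\alpha$ is nonsingular tends to the limiting density $\beta$ of nonsingular symmetric matrices over $\bF_3$, which is exactly the constant in (\ref{beta}); since nonsingularity of $M_\alpha$ implies solvability, this gives the $\liminf$ bound. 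Making the equidistribution uniform enough to average over the (growing) number of prime factors, and handling the contribution of $\alpha$ with atypically few prime factors or with the ramified/inert prime $1-\zeta_3$ dividing them, is the delicate part of this half.

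For the upper bound $\limsup \le 3/4$, the point is that solvability does \emph{not} hold for every $\alpha$: even when $M_\alpha$ is full rank there can be a sign/character condition, and when $M_\alpha$ is singular solvability fails for a positive proportion of $\alpha$. The cleanest way to get an unconditional upper bound of $3/4$ is to exhibit a single, easily-detected local-at-infinity or local-at-a-fixed-small-prime obstruction that kills a density-$1/4$ subset of $S(X)$ among which the $\zeta_3$-Pell equation is never solvable — for instance, by splitting $S(X)$ according to the residue of $\alpha$ (or of one distinguished prime factor) modulo a fixed modulus and showing that on one of the resulting classes, comprising asymptotically a quarter of $S(X)$ by standard counting of ideals with prescribed splitting, the relevant Selmer condition forces non-solvability. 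This mirrors how, in the negative Pell setting, a positive density of $D$ with all prime factors $\equiv 1 \pmod 4$ still fails to have a unit of norm $-1$. The main obstacle I anticipate is the first, analytic step: proving the joint equidistribution of the cubic residue symbols $\left(\tfrac{\pi_i}{\pi_j}\right)_3$ with enough uniformity in $k$ to pin down the limiting matrix-rank distribution, since cubic symbols are governed by characters of conductor involving $9$ and the error terms in the associated Chebotarev/large-sieve estimates must be controlled as the number of symbols grows — this is exactly where the bulk of the technical work, and the restriction to a $\liminf$ rather than a genuine limit, comes from.
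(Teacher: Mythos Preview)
Your approach to the lower bound is essentially the paper's: introduce the symmetric R\'edei matrix $M_L$ over $\bF_3$ with off-diagonal entries $\log\left(\frac{\pi_i}{\pi_j}\right)_3$, show (via genus theory) that full rank of $M_L$ forces the ramified primes to generate $\Cl(L)[\sigma-1]$ and hence forces solvability, and then prove equidistribution of the cubic symbols by a bilinear (double oscillation) estimate so that the proportion of full-rank matrices tends to $\beta$. That half is fine.

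The upper bound, however, does not come from any ``single, easily-detected local obstruction'' killing a density-$1/4$ subset of $S(X)$; no such obstruction exists once one has restricted to $S(X)$, since by construction $\zeta_3$ is already a local norm everywhere. Your suggestion that ``even when $M_\alpha$ is full rank there can be a sign/character condition'' is also not right: full rank of $M_L$ implies solvability unconditionally. The genuine content of the upper bound is one level deeper in the $(\sigma-1)$-filtration of $\Cl(L)$. The paper observes that if the $\zeta_3$-Pell equation is solvable then $M_L$ is the honest R\'edei pairing, and hence $\dim\ker M_L = \dim\bigl((\sigma-1)\Cl(L)[(\sigma-1)^2]\bigr)$. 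The key step is then to construct, for each fixed $\alpha' = \pi_1^{a_1}\cdots\pi_n^{a_n}$ and each admissible matrix $M$, a \emph{governing field} $F_{\alpha'}$ over $F(\zeta_9)$ (where $F = K(\sqrt[3]{\pi_1},\ldots,\sqrt[3]{\pi_n})$) of degree $3^{\dim\ker M}$ such that, for primes $\pi$ with $M_{K(\sqrt[3]{\alpha'\pi})} = M$, the equality $\dim((\sigma-1)\Cl[(\sigma-1)^2]) = \dim\ker M$ holds iff $\pi$ splits completely in $F_{\alpha'}$. This is a Smith-type construction, and it is exactly here that the paper notes the lack of a narrow-class-group analogue forces the use of techniques beyond those in Fouvry--Kl\"uners.

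Applying Chebotarev (in a form uniform enough in the conductor, using that these are $3$-extensions and hence free of Siegel zeroes) then shows that, among $\alpha$ with $\dim\ker M_L = m$, the proportion for which solvability is not already excluded is $3^{-m}$. Since the proportion with $\dim\ker M_L = m$ is $\beta_m = \beta/\prod_{j=1}^m(3^j-1)$, the upper bound is
\[
\sum_{m\ge 0}\frac{\beta_m}{3^m} \;=\; \beta\sum_{m\ge 0}\frac{3^{-m-\binom{m+1}{2}}}{\prod_{j=1}^m(1-3^{-j})} \;=\; \beta\prod_{m\ge 1}(1+3^{-m}) \;=\; \frac{3}{4},
\]
via the classical $q$-series identity. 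So the $3/4$ is not a local artifact but the value of this weighted sum over kernel dimensions; your proposal is missing the entire governing-field layer that produces it.
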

The proof of this theorem requires some applications of classical analytic estimates. The proof is in Section \ref{Proof}.  The version where one allows ramification at $1-\zeta_3$ is Theorem \ref{MainTheorem2}.
Following Stevenhagen, one might be tempted to make the conjecture that
\begin{equation} \label{FalseConjecture}\lim_{X\rightarrow\infty} \frac{|S^{\zeta_3}(X)|}{|S(X)|} = 2-2\beta. \end{equation}
We believe that (\ref{FalseConjecture}) is false, even when one assumes that there is no ramification at $1-\zeta_3$.  We have no theoretical explanation for why this is but there are numerical calculations that cast doubt on the underlying heuristic that produces this number. We discuss this issue in Section \ref{diffs}. \\

This paper is in three parts.  The first part will be algebraic criteria for having a solution or no solution to the $\zeta_3$-Pell equation.  The second part will be analysis to patch together the algebraic criteria in order to apply analytic methods.  The third part is then a discussion of why conjectures are different from the negative Pell case. \\

Finally, the astute reader may notice that this theorem is very similar to the one in \cite{FK1}.  Indeed, the overarching strategy is the same, and the lower bound is argued in a similar manner.  The upper bound is made complicated by the lack of an analogue of the narrow class group, and so we have to use techniques that were developed in \cite{Smith1} to get around this. However, te analytic estimates are much simpler since the Artin $L$-functions which arise in our setting cannot have Siegel zeroes. 


\section{Genus theory and R\'edei matricies}
This section will discuss genus theory and especially how it applies to the case of extensions of the form $K(\sqrt[3]{\alpha})/K$.  The main goal is to come up with algebraic criteria that characterize whether there is a unit $u \in \mathcal{O}_{K(\sqrt[3]{\alpha})}$ with norm equal to $\zeta_3$.

\subsection{The Cubic Residue Symbol and Cubic Reciprocity}
Let $\bZ[\zeta_3]$ be the ring of Eisenstein integers. This ring is a principal ideal domain and it has the units $\pm 1, \pm \zeta_3, \pm \zeta_6$. Up to units, the irreducible elements are $\lambda = 1 + \zeta_3$, the rational primes $q \equiv 2 \pmod{3}$, and the elements $\pi$ of $\bZ[\zeta_3]$, such that $\pi \ol{\pi}$ is a rational prime $p \equiv 1 \pmod{3}$. Denote by $\N$ the norm function on $\bZ[\zeta_3]$. \\

Let $\pi$ be an irreducible element in $\bZ[\zeta_3]$, and let $v \in \bZ[\zeta_3]$. We define the \emph{cubic} symbol $\left(\frac{v}{\pi}\right)_3$ by the formulas

\begin{equation} \left(\frac{v}{\pi}\right)_3 = \zeta_3^j, \pi \nmid v,
\end{equation}
where $j$ is the unique integer $j \in \{0,1,2\}$ such that 
\[v^{(N(\pi) - 1)/3} \equiv \zeta_3^j \pmod{\pi}\]
and
\[\left(\frac{v}{\pi}\right)_3 = 0 \text{ if } \pi | v.\]
Observe that if $\pi, \pi^\prime$ are associates, then $\left(\frac{\cdot}{\pi}\right)_3 = \left(\frac{\cdot}{\pi^\prime}\right)_3$ and the function $v \mapsto \left(\frac{v}{\pi}\right)_3$ is a multiplicative character of the group $(\bZ[\zeta_3]/\pi \bZ[\zeta_3])^\ast$. If $q$ is a rational prime congruent to $2$ mod $3$, the restriction to $\bZ$ of the corresponding cubic character is simply the principal character modulo $q$. We extend the definition of the cubic character to any element $w \in \bZ[\zeta_3]$ coprime to $3$, by the formula
\[\left(\frac{v}{w}\right)_3 = \prod_j \left(\frac{v}{w_j}\right)_3,\]
where $w = \prod_j w_j$ is the unique factorization (up to associates) of $w$ into irreducible elements.

\subsection{Genus theory}
Let $L/F$ be a cyclic extension of number fields, with $\Gal(L/F) = \langle \sigma \rangle$.  Write $I_L$ to be the group of fractional ideals of $L$ and $P_L$ to be the group of principal fractional ideals of $L$, and similarly for $F$.  There are two short exact sequences
$$ 0 \rightarrow \mathcal{O}_L^\times \rightarrow L^\times \rightarrow P_L \rightarrow 0\text{ and}$$
$$0 \rightarrow P_L \rightarrow I_L \rightarrow \Cl(L) \rightarrow 0.$$
Taking cohomology, and using the facts that $H^1(\langle\sigma\rangle, L^\times) = H^1(\langle\sigma\rangle, I_L) = 0$, we get three long exact sequences
$$ 0 \rightarrow \mathcal{O}_F^\times \rightarrow F^\times \rightarrow P_L[\sigma - 1] \rightarrow H^1(\langle\sigma\rangle, \mathcal{O}_L^\times) \rightarrow 0,$$
$$0 \rightarrow P_L[\sigma - 1] \rightarrow I_L[\sigma-1] \rightarrow \Cl(L)[\sigma-1] \rightarrow H^1(\langle\sigma\rangle, P_L) \rightarrow 0, \text{ and}$$
$$0 \rightarrow H^1(\langle\sigma\rangle, P_L) \rightarrow H^2(\langle\sigma\rangle, \mathcal{O}_L^\times) \rightarrow H^2(\langle\sigma\rangle, L^\times).$$
The image of $F^\times$ in $P_L[\sigma-1]$ is just $P_F$, so one gets $H^1(\langle\sigma\rangle, \mathcal{O}_L^\times) = P_L[\sigma-1]/P_F$.  Additionally, since $\langle\sigma\rangle$ is a cyclic group, one knows that $H^2(\langle\sigma\rangle, \mathcal{O}_L^\times) = \mathcal{O}_F^\times/N_{L/F}(\mathcal{O}_L^\times)$ and similarly for $H^2(\langle\sigma\rangle, L^\times)$.  Thus, one can replace $H^2(\langle\sigma\rangle, \mathcal{O}_L^\times) \rightarrow H^2(\langle\sigma\rangle, L^\times)$ with $H^2(\langle\sigma\rangle, \mathcal{O}_L^\times) \rightarrow \mathcal{O}_F^\times/(N_{L/F}(F^\times) \cap \mathcal{O}_F^\times) \rightarrow 0$.  Dividing the first two terms in the second sequence by $P_F$, we can stitch everything together into one sequence:
$$0 \rightarrow H^1(\langle\sigma\rangle, \mathcal{O}_L^\times) \rightarrow I_L[\sigma - 1]/P_F \rightarrow Cl(L)[\sigma - 1] \rightarrow H^2(\langle\sigma\rangle, \mathcal{O}_L^\times) \rightarrow \mathcal{O}_F^\times/(N_{L/F}(F^\times) \cap \mathcal{O}_F^\times) \rightarrow 0.$$

More detailed analysis is possible in general but at this point, we find it useful to switch to the case that $F = K (= \mathbb{Q}(\zeta_3))$ and $L/K$ is cyclic and degree $3$.  Kummer theory says that $L = K(\sqrt[3]{\alpha})$ for some $\alpha \in K$.  Every ideal of $K$ is principal, so the second term in the long exact sequence is $I_L[\sigma-1]/I_K$.  Now, we can write $I_L =  \bigoplus_{\mathfrak{P} \subset \mathcal{O}_K} \bigoplus_{\mathfrak{P}|\mathfrak{P}\mathcal{O}_L} \mathfrak{P}^{\mathbb{Z}}$.  If $\mathfrak{P}$ splits or is inert in $L$, then a straightforward calculation shows that $(\bigoplus_{\mathfrak{P}|\mathfrak{P}\mathcal{O}_L} \mathfrak{P}^{\mathbb{Z}})[\sigma-1] = \mathfrak{P}^\mathbb{Z}$, and if $\mathfrak{P}$ is ramified in $L$, then its easy to check that $(\bigoplus_{\mathfrak{P}|\mathfrak{P}\mathcal{O}_L} \mathfrak{P}^{\mathbb{Z}})[\sigma-1] = \mathfrak{P}^{\frac13\mathbb{Z}}$.  Thus, the second term becomes $\bigoplus_{\mathfrak{P}\text{ ramified in }L} \mathfrak{P}^{\frac13\mathbb{Z}/\mathbb{Z}}$.  The question that we are interested in is whether there is an element $u \in \mathcal{O}_L^\times$ such that $N_{L/K}(u) = \zeta_3$, but that is exactly equivalent to $H^2(\langle\sigma\rangle, \mathcal{O}_L^\times) = 0$, which then can be seen to be equivalent to the two statements that $\mathcal{O}_F^\times/(N_{L/F}(F^\times) \cap \mathcal{O}_F^\times) = 0$ and that $\Cl(L)[\sigma - 1]$ is generated by the ramified primes in $L/K$.  At this point, we are prepared to produce the first algebraic criterion for the $\zeta_3$-Pell equation to have a solution.

\begin{theorem}\label{Zeta3PellCondition1}
Write $L = K(\sqrt[3]{\alpha})$ as above, and assume that $\alpha = \pi_1^{a_1} \cdots \pi_n^{a_n}$ with $\pi_i \equiv 1 \pmod{\lambda^2}$ and $a_i = 1$ or $2$.  Then there is a solution to the $\zeta_3$-Pell equation if and only if all the $\pi_i$s satisfy $\pi_i \equiv 1 \pmod{\lambda^3}$ and the primes $\mathfrak{P}_i$ lying above $\pi_i\mathcal{O}_K$ generate $\Cl(L)[\sigma - 1]$.
\end{theorem}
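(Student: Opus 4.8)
The plan is to convert the cohomological criterion established just above into the two stated conditions. By the long exact sequence derived before the theorem, the $\zeta_3$-Pell equation for $L=K(\sqrt[3]{\alpha})$ is solvable if and only if $H^{2}(\langle\sigma\rangle,\mathcal O_{L}^{\times})=0$, and (reading the rightmost cokernel as the image of $H^{2}(\langle\sigma\rangle,\mathcal O_{L}^{\times})$ in $H^{2}(\langle\sigma\rangle,L^{\times})=K^{\times}/N_{L/K}(L^{\times})$) this holds exactly when both
\begin{itemize}
\item[(A)] $\zeta_3\in N_{L/K}(L^{\times})$; equivalently $\mathcal O_{K}^{\times}\subseteq N_{L/K}(L^{\times})$, since $N_{L/K}(-1)=-1$ so among $\mathcal O_{K}^{\times}=\mu_6$ only the class of $\zeta_3$ can survive in $\mathcal O_{K}^{\times}/(\mathcal O_{K}^{\times}\cap N_{L/K}(L^{\times}))$; and
\item[(B)] $\Cl(L)[\sigma-1]$ is generated by the classes of the primes of $\mathcal O_{L}$ lying over the primes of $K$ that ramify in $L$.
\end{itemize}
Condition (B) is visibly the second clause of the theorem once one identifies the ramified primes. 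Since $v_{\pi_i}(\alpha)=a_i\in\{1,2\}$ is coprime to $3$, each $\pi_i$ ramifies (tamely), so every $\mathfrak P_i$ is a ramified prime; the only other candidate is $\lambda$, and since $\alpha$ is a $\lambda$-adic unit, $L_{\lambda}/K_{\lambda}$ is either unramified or the wildly ramified extension $K_{\lambda}(\zeta_9)=K_{\lambda}(\sqrt[3]{\zeta_3})$. I will show in the last paragraph that $\alpha\equiv1\pmod{\lambda^{3}}$ forces the unramified alternative. Granting this, the argument splits: if all $\pi_i\equiv1\pmod{\lambda^{3}}$ then $\alpha\equiv1\pmod{\lambda^{3}}$, so $\lambda$ is unramified, the ramified primes are exactly $\mathfrak P_1,\dots,\mathfrak P_n$, (A) holds by the next paragraph, and hence solvability is equivalent to (B), i.e.\ to the stated clause; if some $\pi_i\not\equiv1\pmod{\lambda^{3}}$ then (A) fails by the next paragraph, so there is no solution, while the right-hand side of the claimed equivalence also fails, so the equivalence holds vacuously --- in particular one never has to understand $\Cl(L)[\sigma-1]$ when $\lambda$ ramifies.

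\textbf{Condition (A).} I would apply Hasse's norm theorem to the cyclic extension $L/K$: $\zeta_3$ is a global norm if and only if it is a local norm at every place. At the complex place and at every finite place prime to $3\alpha$, the local extension is unramified or trivial and $\zeta_3$ is a unit, hence a local norm; thus only $\lambda$ and the $\mathfrak P_i$ are at issue. At $\lambda$: if $L_{\lambda}/K_{\lambda}$ is unramified then $\zeta_3$ (a unit) is a local norm; if $L_{\lambda}=K_{\lambda}(\sqrt[3]{\zeta_3})$ then $(\zeta_3,\alpha)_{\lambda}=(\zeta_3,\zeta_3)_{\lambda}^{\pm1}$, and $(\zeta_3,\zeta_3)_{\lambda}=(\zeta_3,-1)_{\lambda}=1$ because $-1$ is a cube in $K_{\lambda}$, so $\zeta_3$ is a local norm at $\lambda$ unconditionally. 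At $\mathfrak P_i$ the extension $L_{\mathfrak P_i}/K_{\pi_i}$ is totally tamely ramified, so the tame symbol formula gives $(\zeta_3,\alpha)_{\pi_i}=(\zeta_3,\pi_i)_{\pi_i}^{a_i}=\bigl(\tfrac{\zeta_3}{\pi_i}\bigr)_{3}^{a_i}$, which is trivial if and only if $\bigl(\tfrac{\zeta_3}{\pi_i}\bigr)_{3}=1$ since $\gcd(a_i,3)=1$. Finally I would identify, for $\pi_i\equiv1\pmod{\lambda^{2}}$, the condition $\bigl(\tfrac{\zeta_3}{\pi_i}\bigr)_{3}=1$ with $\pi_i\equiv1\pmod{\lambda^{3}}$: this is the supplementary law of cubic reciprocity, and it can also be obtained from the Hilbert product formula $\prod_{v}(\zeta_3,\pi_i)_{v}=1$ (only $v=\pi_i$ and $v=\lambda$ contribute, so $\bigl(\tfrac{\zeta_3}{\pi_i}\bigr)_{3}=(\zeta_3,\pi_i)_{\lambda}^{-1}$) together with the local fact that $K_{\lambda}(\zeta_9)/K_{\lambda}$ has conductor exactly $\lambda^{3}$, so that $u\mapsto(\zeta_3,u)_{\lambda}$ induces an isomorphism $U^{(2)}/U^{(3)}\xrightarrow{\ \sim\ }\mu_3$ of the unit filtration; hence for $\pi_i\in U^{(2)}$ one has $(\zeta_3,\pi_i)_{\lambda}=1$ iff $\pi_i\in U^{(3)}$. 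Running this over all $i$ shows (A) is equivalent to ``$\pi_i\equiv1\pmod{\lambda^{3}}$ for every $i$'', which with the case split above proves the theorem.

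\textbf{The local lemma and the main obstacle.} It remains to prove that $\alpha\equiv1\pmod{\lambda^{3}}$ implies $K_{\lambda}(\sqrt[3]{\alpha})/K_{\lambda}$ unramified, which I would do by producing a cube root of $\alpha$ in the maximal unramified extension $K_{\lambda}^{\mathrm{nr}}$. Writing $\alpha=1+\lambda^{3}\beta$ with $\beta\in\mathcal O_{K_{\lambda}}$ and looking for $\gamma=1+\lambda\epsilon$ with $\gamma^{3}=\alpha$, and using $v_{\lambda}(3)=2$ (so $3=u_0\lambda^{2}$ for a unit $u_0$), the equation divided by $\lambda^{3}$ becomes $\epsilon^{3}+u_0\lambda\epsilon^{2}+u_0\epsilon=\beta$, whose reduction $\bar\epsilon^{3}+\bar u_0\bar\epsilon-\bar\beta=0$ over the algebraically closed residue field of $K_{\lambda}^{\mathrm{nr}}$ has nonzero derivative $\bar u_0$, hence is separable; Hensel's lemma then supplies $\epsilon\in\mathcal O_{K_{\lambda}^{\mathrm{nr}}}$, so $\sqrt[3]{\alpha}=\gamma\in K_{\lambda}^{\mathrm{nr}}$. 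I expect the main obstacle to be exactly this bundle of $\lambda$-adic computations --- the conductor of $K_{\lambda}(\zeta_9)/K_{\lambda}$ and the unramifiedness lemma --- since $\lambda$ is wildly ramified for cubic extensions and has to be handled by hand; everything happening away from $\lambda$ is tame and routine, and the remainder of the argument is bookkeeping with the genus-theory sequence and the Hasse norm theorem.
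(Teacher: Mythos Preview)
Your approach via the Hasse norm theorem is sound in outline and genuinely different from the paper's. For the implication ``all $\pi_i\equiv 1\pmod{\lambda^3}\Rightarrow\zeta_3\in N_{L/K}(L^\times)$'' the paper does \emph{not} compute local norms: it invokes the Herbrand quotient $h_{2/1}(\langle\sigma\rangle,\mathcal O_L^\times)=1/3$ together with the genus--theory exact sequence to obtain $\dim_{\mathbb F_3}\Cl(L)[\sigma-1]=n-1-\dim_{\mathbb F_3}\bigl(\mathcal O_K^\times/(N_{L/K}(L^\times)\cap\mathcal O_K^\times)\bigr)$, and then exhibits the explicit everywhere--unramified extension $L(\sqrt[3]{\pi_1},\dots,\sqrt[3]{\pi_{n-1}})/L$ of degree $3^{n-1}$, abelian over $K$, to force $\dim\Cl(L)[\sigma-1]\ge n-1$ and hence the unit--norm cokernel to vanish. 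The payoff of the paper's route is that it simultaneously identifies the genus field (Corollary~\ref{GenusField}), which is needed immediately afterwards to write down the R\'edei pairing; your local route is more direct for (A) but does not produce this byproduct, so you would still need the paper's construction later.

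There is one genuine slip in your treatment at $\lambda$. For $\alpha\in U^{(2)}$ the dichotomy ``$L_\lambda$ is unramified or equals $K_\lambda(\zeta_9)$'' is false: since $\zeta_3\in U^{(1)}\setminus U^{(2)}$ while cubes of units lie in $U^{(4)}$, the class of $\zeta_3$ in $K_\lambda^\times/(K_\lambda^\times)^3$ never meets $U^{(2)}$, so $K_\lambda(\sqrt[3]{\zeta_3})$ cannot arise from such $\alpha$; yet for $\alpha\in U^{(2)}\setminus U^{(3)}$ the extension $K_\lambda(\sqrt[3]{\alpha})$ is a \emph{different} wildly ramified cubic. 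Consequently your claim ``$\zeta_3$ is a local norm at $\lambda$ unconditionally'' is wrong --- indeed your own conductor computation shows $(\zeta_3,\cdot)_\lambda$ is nontrivial on $U^{(2)}/U^{(3)}$, so $(\zeta_3,\alpha)_\lambda\ne 1$ exactly when $\alpha\in U^{(2)}\setminus U^{(3)}$. The repair is already implicit in your Strategy paragraph: in Case~1 (all $\pi_i\equiv 1\pmod{\lambda^3}$) one has $\alpha\in U^{(3)}$, so your Hensel lemma makes $L_\lambda/K_\lambda$ unramified and $\zeta_3$ is a local norm at $\lambda$; in Case~2 the obstruction at the bad $\pi_i$ already kills (A) and the behaviour at $\lambda$ is irrelevant. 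So delete the dichotomy and the word ``unconditionally'', and run the $\lambda$--analysis only inside Case~1; with that change your argument goes through.
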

\begin{proof}
Based on the discussion above, what is needed to be shown is that $\alpha$ has the specified form if and only if $\mathcal{O}_F^\times/(N_{L/F}(F^\times) \cap \mathcal{O}_F^\times) = 0$. \\

If $\alpha$ is divisible by a prime $\pi$ with $\pi \not\equiv 1 \pmod{\lambda^3}$ then, writing $K_\pi$ for the completion of $K$ at $\pi$ and $L_\pi$ for $L \otimes_K K_\pi$, one has that $L_\pi$ is a ramified cubic extension of $K_\pi$.  Thus, $N_{L_{\pi}/K_{\pi}}(\mathcal{O}_{L_{\pi}}^\times)$ is the unique index three subgroup of $\mathcal{O}_{K_{\pi}}^\times$.  Since $\pi \not\equiv 1 \pmod{\lambda^3}$, there is no ninth root of unity in $K_{\pi}$, and so $\zeta_3$ isn't a cube in $K_{\pi}$ and consequently not a norm from $\mathcal{O}_{L_{\pi}}^\times$.  Thus, if the first condition is not satisfied, then there is a local obstruction to $\zeta_3$ being a norm of an element of $L^\times$ and so $\mathcal{O}_F^\times/(N_{L/F}(F^\times) \cap \mathcal{O}_F^\times) \neq 0$. \\

As for the other direction, assume that $\alpha$ is of the form described.  Classical calculations (see for example (tbc)) show that $h_{2/1}(\langle\sigma\rangle, \mathcal{O}_L^\times) = 1/3$, and thus one gets that $\dim_{\mathbb{F}_3}(\Cl(L)[\sigma-1]) = n - 1 - \dim_{\mathbb{F}_3}(\mathcal{O}_F^\times/(N_{L/F}(F^\times) \cap \mathcal{O}_F^\times))$ as there are $n$ primes ramified in $L/K$.  But one also has that $\dim_{\mathbb{F}_3}(\Cl(L)[\sigma - 1]) = \dim_{\mathbb{F}_3}(\Cl(L)/(\sigma - 1))$.  Moreover, $L(\sqrt[3]{\pi_1}, \ldots, \sqrt[3]{\pi_{n-1}})$ is a degree $3^{n-1}$ extension of $L$ that is unramified everywhere (the total extension is unramified at $\lambda$ because all of cuberoots are of elements that are $1 \pmod{\lambda}^3$).  Further, it is abelian over $K$ and thus corresponds to a $\sigma-1$ invariant quotient of $\Cl(L)$.  One has then that $n-1 \leq \dim_{\mathbb{F}_3}(\Cl(L)[\sigma - 1]) = n-1-\dim_{\mathbb{F}_3}(\mathcal{O}_F^\times/(N_{L/F}(F^\times) \cap \mathcal{O}_F^\times))$ and so $\mathcal{O}_F^\times/(N_{L/F}(F^\times) \cap \mathcal{O}_F^\times) = 0$, which is what was needed.
\end{proof}
The proof of this theorem also proves the following:
\begin{corollary}\label{GenusField}
With $L$, $\alpha$, and $\pi_i$ as above, and assuming that $\pi_i \equiv 1\pmod{\lambda^3}$ for all $i$, one has that the maximal unramified $\sigma-1$ cotorsion extension of $L$ is $L(\sqrt[3]{\pi_1}, \cdots, \sqrt[3]{\pi_{n-1}})$.
\end{corollary}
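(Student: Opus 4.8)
The idea is that the proof of Theorem \ref{Zeta3PellCondition1} already establishes the numerical coincidence that forces the stated equality, so this corollary is essentially a matter of unwinding the inequality chain that appeared there and observing that it is in fact an equality. First I would recall the setup: under the hypothesis $\pi_i \equiv 1 \pmod{\lambda^3}$ for all $i$, the argument in Theorem \ref{Zeta3PellCondition1} showed that $\mathcal{O}_F^\times/(N_{L/F}(F^\times) \cap \mathcal{O}_F^\times) = 0$, and hence that $\dim_{\mathbb{F}_3}(\Cl(L)[\sigma-1]) = n-1$, where $n$ is the number of primes of $K$ ramified in $L/K$. The same argument produced an explicit unramified abelian-over-$K$ extension of $L$, namely $M := L(\sqrt[3]{\pi_1}, \ldots, \sqrt[3]{\pi_{n-1}})$, of degree $3^{n-1}$ over $L$; this extension corresponds (by class field theory for $L$) to a quotient of $\Cl(L)$ that is annihilated by $\sigma-1$, that is, to a $\sigma-1$-cotorsion (equivalently $(\sigma-1)$-fixed) quotient of $\Cl(L)$ of order exactly $3^{n-1}$.

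The main point is then a matching of orders. Let $N$ denote the maximal unramified extension of $L$ that is abelian over $K$ with $\Gal(N/L)$ killed by $\sigma - 1$; by class field theory $\Gal(N/L)$ is the maximal quotient of $\Cl(L)$ on which $\sigma$ acts trivially, which has $\mathbb{F}_3$-dimension (after noting everything in sight is $3$-torsion, since $\sigma$ has order $3$ and a quotient of $\Cl(L)$ fixed by $\sigma$ is killed by $1+\sigma+\sigma^2 = 3$ on the part coming from the norm, combined with the $\sigma=1$ relation) equal to $\dim_{\mathbb{F}_3}(\Cl(L)/(\sigma-1)) = \dim_{\mathbb{F}_3}(\Cl(L)[\sigma-1]) = n-1$. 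On the other hand $M \subseteq N$ and $[M:L] = 3^{n-1}$. Two extensions of $L$ inside a common algebraic closure, one contained in the other, with the same finite degree, must coincide; hence $M = N$, which is exactly the assertion.

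The step I expect to require the most care is making precise the phrase ``maximal unramified $\sigma-1$ cotorsion extension'' and checking that its Galois group over $L$ really is the $\sigma$-coinvariants $\Cl(L)_{\sigma} = \Cl(L)/(\sigma-1)\Cl(L)$ rather than, say, the $\sigma$-invariants, and that the identification of $\dim_{\mathbb{F}_3}$ of coinvariants and invariants (used already in the proof of Theorem \ref{Zeta3PellCondition1}) is legitimate here; this is where one must be careful that $M/K$ being abelian is equivalent to $\Gal(M/L)$ being a trivial $\sigma$-module, and that the Artin map is $\Gal(L/K)$-equivariant so that ``abelian over $K$'' translates into ``quotient of $\Cl(L)_\sigma$.'' Also worth a line: one should confirm $M/L$ is genuinely unramified everywhere, including at $\lambda$ — but this was already checked inside the proof of Theorem \ref{Zeta3PellCondition1}, using that each $\pi_i \equiv 1 \pmod{\lambda^3}$, so I would simply cite that. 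Once these bookkeeping points are nailed down, the equality of the two fields follows purely from the equality of degrees, with no further computation.
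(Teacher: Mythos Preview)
Your proposal is correct and follows essentially the same approach as the paper: the paper offers no separate proof but simply notes that the corollary is contained in the proof of Theorem \ref{Zeta3PellCondition1}, and your argument is precisely the unwinding of that proof—matching the degree $3^{n-1}$ of the explicit extension against $\dim_{\mathbb{F}_3}(\Cl(L)/(\sigma-1)) = \dim_{\mathbb{F}_3}(\Cl(L)[\sigma-1]) = n-1$ to force equality. Your added care about the meaning of ``$\sigma-1$ cotorsion extension,'' the equivariance of the Artin map, and unramifiedness at $\lambda$ is all implicit in the paper's argument and appropriate to make explicit.
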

\subsection{R\'edei matricies}\label{redei}
Fix $L = K(\sqrt[3]{\alpha})$, and assume that $\alpha = \pi_1^{a_1} \cdots \pi_n^{a_n}$ with $\pi_i \equiv 1\pmod{\lambda^3}$.  The goal now is to undestand the group $\Cl(L)_{\sigma-1}$, which is all of the $(\sigma - 1)$-power torsion in the class group.  We will not come to a complete description, but we will be able to say a bit more than in the previous subsection.\\

The main trick is the following: if $M$ is an $R_{\sigma} = \mathbb{Z}[\sigma]/\langle \sigma^2+\sigma+1 \rangle$-module which is finite as a group, then one may consider the map from $M[\sigma-1] \rightarrow M/(\sigma-1)$.  This is a map from $\mathbb{F}_3$-vector spaces of the same dimension, and so is given by a matrix.  One can compute this rank by appealing to the fact that every $R_{\sigma}$-module that is finite as a group is of the form $\oplus_{\pi_i} \oplus_{a_{ij}} R_\sigma/\pi_i^{a_{ij}}$ where the first sum runs over some primes in $R_\sigma$ and $a_{ij}$s are positive integers.  Assuming that $\pi_1$ is the prime $\sigma-1$, then one has that the dimension of the spaces $M[\sigma-1]$ and $M/(\sigma-1)$ are the number of $a_{1j}$'s, and the rank is the number of $a_{1j}$'s that are equal to one.  Finally, it is actually useful to think of this not as a map but rather as a pairing $M[\sigma - 1] \times M^\vee[\sigma-1] \rightarrow \mathbb{F}_3$, where now the left kernel of this pairing is $(\sigma - 1) M[(\sigma-1)^2]$.  This is obviously equivalent, but in the case of class groups, its simpler to write down the pairing than the actual map.\\

To apply this idea to the setup here, let $\mathfrak{P}_i$ be the prime lying over $\pi_i$.  While it is not clear if the primes $\mathfrak{P}_i$ generate $\Cl(L)[\sigma-1]$, we can still consider the the subgroup of $\Cl(L)[\sigma-1]$ generated by these primes.  There is the relation $\mathfrak{P}_1^{a_1}\cdots\mathfrak{P}_n^{a_n} = \sqrt[3]{\alpha}\mathcal{O}_L$ which says that we only need to consider the subgroup generated by $\mathfrak{P}_i$ for $i$ running from $1$ to $n-1$.  Additionally, we know by Corollary \ref{GenusField} that $\Cl(L)/(\sigma-1) = \Gal(L(\sqrt[3]{\pi_1}, \cdots, \sqrt[3]{\pi_{n-1}})/L)$.  Thus, one has that $\Cl(L)^\vee[\sigma-1]$ is generated by characters $\chi_i$ with $\chi_i(\mathfrak{P}_j) = \left(\frac{\pi_j}{\pi_i}\right)$ for $i \neq j$ and by using the fact that $\chi_i(\mathfrak{P}_1^{a_1} \cdots \mathfrak{P}_n^{a_n}) = 1$ to compute $\chi_i(\mathfrak{P}_i)$.\\

To write down the actual matrix, we will write $\log\left(\left(\frac{\beta}{\pi}\right)_3\right) = a$ for $a \in \mathbb{F}_3$ if $\left(\frac{\beta}{\pi}\right)_3 = \zeta_3^a$.  Then if we let $b_{ii}=a_i^{-1}\log\left(\left(\frac{\pi_i^{a_i}/\alpha}{\pi_i}\right)_3\right)$ and $b_{ij} = \log\left(\left(\frac{\pi_i}{\pi_j}\right)_3\right)$, one has that the matrix $M_L = (b_{ij})$ represents the pairing the subgroup of $\Cl(L)[\sigma-1]$ generated by the primes $\mathfrak{P}_i$ with $\Cl(L)^\vee[\sigma-1]$.
\begin{theorem}\label{PositiveCriterion}
Assume that $M_L$ has full rank.  Then there is a unit $u \in\mathcal{O}_L^\times$ such that $N_{L/K}(u) = \zeta_3$.
\end{theorem}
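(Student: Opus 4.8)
The plan is to feed the full-rank hypothesis on $M_L$ into the criterion of Theorem~\ref{Zeta3PellCondition1}. By the construction in Section~\ref{redei}, the matrix $M_L$ represents the pairing between the subgroup $H \subseteq \Cl(L)[\sigma-1]$ generated by the ramified primes $\mathfrak{P}_1, \ldots, \mathfrak{P}_n$ (equivalently $\mathfrak{P}_1,\ldots,\mathfrak{P}_{n-1}$, using the relation $\mathfrak{P}_1^{a_1}\cdots\mathfrak{P}_n^{a_n} = \sqrt[3]{\alpha}\,\mathcal{O}_L$) and the dual space $\Cl(L)^\vee[\sigma-1]$, whose dimension equals $\dim_{\mathbb{F}_3}\Cl(L)[\sigma-1]$. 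First I would observe that since $\pi_i \equiv 1 \pmod{\lambda^3}$ for all $i$ by hypothesis, the first condition of Theorem~\ref{Zeta3PellCondition1} is automatic, so it remains only to show that the $\mathfrak{P}_i$ generate $\Cl(L)[\sigma-1]$, i.e.\ that $H = \Cl(L)[\sigma-1]$.

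Next I would extract the numerical consequence of full rank. The pairing $\Cl(L)[\sigma-1] \times \Cl(L)^\vee[\sigma-1] \to \mathbb{F}_3$ is perfect, and restricting the left argument to $H$ gives the matrix $M_L$, whose rank is $\rank M_L = \dim_{\mathbb{F}_3} H - \dim_{\mathbb{F}_3}(H \cap H^{\perp})$ where $H^\perp$ is the annihilator of $H$ in $\Cl(L)^\vee[\sigma-1]$; since the ambient pairing is perfect, $H^\perp$ has dimension $\dim\Cl(L)[\sigma-1] - \dim H$. The matrix $M_L$ is $(n-1)\times(n-1)$ after using the relation among the $\mathfrak{P}_i$, and its columns span $\Cl(L)^\vee[\sigma-1]$ restricted to $H$; full rank forces $\dim_{\mathbb{F}_3}\Cl(L)^\vee[\sigma-1] = n-1$, hence $\dim_{\mathbb{F}_3}\Cl(L)[\sigma-1] = n-1$. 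On the other hand, Corollary~\ref{GenusField} shows $L(\sqrt[3]{\pi_1},\ldots,\sqrt[3]{\pi_{n-1}})$ is the maximal unramified $(\sigma-1)$-cotorsion extension, so $H$ already surjects onto $\Cl(L)/(\sigma-1)$, which has dimension $n-1$; combined with $\dim H \le \dim\Cl(L)[\sigma-1] = n-1$ this pins down $\dim H = n-1 = \dim\Cl(L)[\sigma-1]$, and since $H \subseteq \Cl(L)[\sigma-1]$ this gives $H = \Cl(L)[\sigma-1]$.

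Having established that the $\mathfrak{P}_i$ generate $\Cl(L)[\sigma-1]$ and that every $\pi_i \equiv 1 \pmod{\lambda^3}$, both conditions of Theorem~\ref{Zeta3PellCondition1} are met, so there is a solution to the $\zeta_3$-Pell equation, i.e.\ a unit $u \in \mathcal{O}_L^\times$ with $N_{L/K}(u) = \zeta_3$, which is what we wanted.

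The step I expect to be the main obstacle is the bookkeeping in the middle paragraph: one must be careful about the distinction between $\Cl(L)[\sigma-1]$ (the $(\sigma-1)$-torsion) and $\Cl(L)_{\sigma-1}$ (the full $(\sigma-1)$-power torsion), and about the precise interpretation of "full rank" of an $(n-1)\times(n-1)$ matrix whose target a priori could be larger than $\mathbb{F}_3^{n-1}$. The key point making this work is that $\Cl(L)[\sigma-1]$ and $\Cl(L)/(\sigma-1)$ have equal $\mathbb{F}_3$-dimension (as $\Cl(L)$ is a finite $R_\sigma$-module, via the structure theorem $\bigoplus R_\sigma/\pi_i^{a_{ij}}$ recalled in Section~\ref{redei}), so controlling one controls the other; once that is in hand, the rank computation and the genus-field identity of Corollary~\ref{GenusField} squeeze $H$ to be everything.
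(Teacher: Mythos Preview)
Your approach is the paper's: reduce to Theorem~\ref{Zeta3PellCondition1} by showing the ramified primes generate $\Cl(L)[\sigma-1]$ via a dimension count, the key input being $\dim\Cl(L)[\sigma-1] = \dim\Cl(L)/(\sigma-1) = n-1$. The paper simply notes that $M_L$ represents the composite $V \to \Cl(L)[\sigma-1] \to \Cl(L)/(\sigma-1)$ between three $(n-1)$-dimensional spaces, so full rank forces the first arrow to be injective, hence bijective.

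Your middle paragraph has the right ingredients but the logic is scrambled. The expression $H \cap H^{\perp}$ does not typecheck ($H$ sits in $\Cl(L)[\sigma-1]$, $H^\perp$ in its dual). The equality $\dim\Cl(L)^\vee[\sigma-1] = n-1$ is a consequence of Corollary~\ref{GenusField}, not of full rank. And Corollary~\ref{GenusField} does \emph{not} say that $H$ surjects onto $\Cl(L)/(\sigma-1)$; that surjection is precisely the content of $M_L$ having full rank, so invoking it separately is circular. Your final paragraph, however, isolates exactly the right fact (equal dimensions of $(\sigma-1)$-torsion and $(\sigma-1)$-cotorsion) and the right squeeze; replace the middle paragraph with that and you recover the paper's three-line proof.
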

\begin{proof}
Let $V$ be the free $n-1$-dimensional $\mathbb{F}_3$ vector space with basis vectors $e_i$.  Saying $M_L$ has full rank is the same thing as saying that the composite map $V \rightarrow \Cl(L)[\sigma-1] \rightarrow \Cl(L)/(\sigma-1)$ has full rank (where the first map sends $e_i$ to $\mathfrak{P}_i^{a_i}$ and the second map is the natural one).  But these are all maps between $n-1$-dimensional vector spaces over $\mathbb{F}_3$ and so if the composite map has full rank, so too does the first one.  But that just says that the ramified primes generate $\Cl(L)[\sigma-1]$, which by Theorem \ref{Zeta3PellCondition1} shows that there is a unit $u \in\mathcal{O}_L^\times$ such that $N_{L/K}(u) = \zeta_3$.
\end{proof}

This condition is not necessary for the $\zeta_3$-Pell equation to be solvable.  Indeed, playing around with fields of the form $K(\sqrt[3]{\pi_1\pi_2})$, one sees that a proportion of approximately equal to $2/3$ of such fields satisfy the condition, but an additional proportion roughly equal to $1/36$ do not satisfy the condition but nevertheless have a solution to the $\zeta_3$-Pell equation.

\section{Governing fields}\label{governing}
The goal of this section is to produce a negative criterion for the $\zeta_3$-Pell equation, allowing us to bound from above the number of fields which contains a solution to the equation.

\begin{theorem}\label{NegativeCondition}
Keeping notation as above, if one has that $\mathrm{dim}(\mathrm{ker}(M_L)) > \mathrm{dim}((\sigma-1)\Cl(L)/(\sigma-1)^2)$, then the $\zeta_3$-Pell equation is insoluble.
\end{theorem}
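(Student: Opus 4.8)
The plan is to argue by contradiction, combining the criterion of Theorem~\ref{Zeta3PellCondition1} with the linear algebra of Section~\ref{redei}. Since we keep the notation of Section~\ref{redei}, every $\pi_i$ satisfies $\pi_i \equiv 1 \pmod{\lambda^3}$, so by Theorem~\ref{Zeta3PellCondition1} the $\zeta_3$-Pell equation for $L$ is soluble if and only if the ramified primes $\mathfrak{P}_1,\dots,\mathfrak{P}_n$ generate $\Cl(L)[\sigma-1]$; suppose for contradiction that they do. Recall from the proof of Theorem~\ref{PositiveCriterion} that $M_L$ is the matrix of the composite $\phi\circ\psi$, where $\psi\colon V \to \Cl(L)[\sigma-1]$ is the map $e_i \mapsto [\mathfrak{P}_i^{a_i}]$ and $\phi\colon \Cl(L)[\sigma-1]\to\Cl(L)/(\sigma-1)$ is the natural map. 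As $M_L$ is a square $(n-1)\times(n-1)$ matrix and $\dim_{\mathbb{F}_3} V = n-1$, comparing ranks gives $\dim\ker(M_L) = \dim\ker(\phi\circ\psi) = \dim\psi^{-1}(\ker\phi)$.

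First I would show that $\psi$ is an isomorphism. Because the $a_i$ are prime to $3$, the image of $\psi$ is $\langle[\mathfrak{P}_1],\dots,[\mathfrak{P}_{n-1}]\rangle$, and the relation $\mathfrak{P}_1^{a_1}\cdots\mathfrak{P}_n^{a_n} = \sqrt[3]{\alpha}\,\mathcal{O}_L$ puts $[\mathfrak{P}_n]$ into this span; hence the image of $\psi$ equals the subgroup generated by all the ramified primes, which by our assumption is all of $\Cl(L)[\sigma-1]$, so $\psi$ is surjective. On the other hand the proof of Theorem~\ref{Zeta3PellCondition1} --- and this is precisely where the hypothesis $\pi_i \equiv 1\pmod{\lambda^3}$ for all $i$ enters --- yields $\dim_{\mathbb{F}_3}\Cl(L)[\sigma-1] = n-1 = \dim_{\mathbb{F}_3} V$. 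A surjection between $\mathbb{F}_3$-vector spaces of equal finite dimension is an isomorphism, so $\dim\ker(M_L) = \dim\psi^{-1}(\ker\phi) = \dim\ker\phi$.

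It then remains to identify $\dim\ker\phi$ with $\dim\big((\sigma-1)\Cl(L)/(\sigma-1)^2\big)$. By the discussion in Section~\ref{redei}, $\ker\phi$ is the left kernel of the R\'edei pairing $\Cl(L)[\sigma-1]\times\Cl(L)^\vee[\sigma-1]\to\mathbb{F}_3$, namely $(\sigma-1)\Cl(L)[(\sigma-1)^2]$. A one-line check shows $(\sigma-1)\Cl(L)[(\sigma-1)^2] = (\sigma-1)\Cl(L)\cap\Cl(L)[\sigma-1]$, and this group is the kernel of multiplication by $\sigma-1$ from $(\sigma-1)\Cl(L)$ onto $(\sigma-1)^2\Cl(L)$; comparing orders gives $\#\ker\phi = \#\big((\sigma-1)\Cl(L)/(\sigma-1)^2\Cl(L)\big)$, hence the $\mathbb{F}_3$-dimensions agree. (Equivalently, via the structure theorem for finite $R_\sigma$-modules both numbers count the cyclic summands of order at least $9$ in the $(\sigma-1)$-primary part of $\Cl(L)$.) Therefore $\dim\ker(M_L) = \dim\big((\sigma-1)\Cl(L)/(\sigma-1)^2\big)$, contradicting the hypothesis $\dim\ker(M_L) > \dim\big((\sigma-1)\Cl(L)/(\sigma-1)^2\big)$; so the $\zeta_3$-Pell equation is insoluble.

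Everything here is formal once two bookkeeping points are pinned down, and I expect these to be the only places needing care: that $M_L$, with the normalization $b_{ii} = a_i^{-1}\log(\cdots)$, genuinely represents $\phi\circ\psi$ (equivalently the induced pairing), and the kernel identity $(\sigma-1)\Cl(L)[(\sigma-1)^2] = (\sigma-1)\Cl(L)\cap\Cl(L)[\sigma-1]$ with its dimension count. The genus-theoretic input $\dim_{\mathbb{F}_3}\Cl(L)[\sigma-1] = n-1$ is what lets us upgrade surjectivity of $\psi$ to bijectivity, and thus is essential to the argument.
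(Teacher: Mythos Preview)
Your proof is correct and follows essentially the same approach as the paper's: assume solubility, use Theorem~\ref{Zeta3PellCondition1} to conclude the ramified primes generate $\Cl(L)[\sigma-1]$, deduce that $M_L$ then represents the full R\'edei map $\Cl(L)[\sigma-1]\to\Cl(L)/(\sigma-1)$, and read off the kernel dimension from the structure of the $(\sigma-1)$-primary part. The paper compresses all of this into two sentences by appealing to the discussion in Section~\ref{redei}, whereas you have spelled out the isomorphism $\psi$ and the identity $\dim\ker\phi=\dim\bigl((\sigma-1)\Cl(L)/(\sigma-1)^2\bigr)$ explicitly; the substance is the same.
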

\begin{proof}
As discussed above, if there is a solution to the $\zeta_3$-Pell equation, one has that $M_L$ is the actual R\`edei pairing and not just the pairing restricted to a subgroup of the $(\sigma - 1)$-torsion.  Thus, one would have $\mathrm{dim}(\mathrm{ker}(M_L)) = \mathrm{dim}((\sigma-1)\Cl(L)/(\sigma-1)^2)$ in this case, contradicting the assumption in the Theorem.
\end{proof}

Before constructing the governing fields in full generality, we find it illuminating to focus on one simple case.  We will look at fields of the form $L = K(\sqrt[3]{17\pi})$.  Observe that there is a diagram of fields with the field corresponding to $\Cl(L)/(\sigma - 1)$ on top:
\begin{diagram}
 & & K(\sqrt[3]{17}, \sqrt[3]{\pi}) \\
 & \ldLine & & \rdLine \\
 L & & & & K(\sqrt[3]{17}) \\
 & \rdLine & & \ldLine \\
 & & K
\end{diagram}
As discussed before, the story is over if $\left(\frac{\pi}{17}\right)_3 \neq 1$: under this assumption we have that all of the $(\sigma-1)$-power part of the class group is $(\sigma-1)$-torsion.  Thus, we will assume that $\left(\frac{\pi}{17}\right)_3 \neq 1$.  We want to know if there is a an unramified extension $L_2/L$ such that $\Gal(L_2/L) = R_{\sigma}/(\sigma-1)^2$.  Writing $F = K(\sqrt[3]{17})$ and $\tau$ for the generator of $\Gal(F/K)$, we have any such field is also abelian over $F$ with Galois group $R_\tau/(\tau - 1)^2$, ramified only at $\pi$.  But now class field theory intervenes and we can compute the maximal extension of $F$, unramified away from $\pi$, with Galois group killed by $\tau - 1$.  Denoting this field by $F_\pi$, we have that $\Gal(F_\pi/F)$ is surjected onto by $V_\pi := (\mathcal{O}_F/\pi)^\times \otimes_\mathbb{Z} \mathbb{F}_3$, and the kernel of this surjection is generated by $V_\pi[\tau - 1]$ and the image of $\mathcal{O}_L^\times$.  Thus, there is a field $L_2/F$ if and only if $\mathcal{O}_L^\times \subset V_{\pi}[\tau-1]$, which happens if and only if $(\mathcal{O}_F^\times)^{\tau-1}$ all reduce to cubes mod $\pi$.  But that is tantamount to saying that $\pi$ splits in $F(\sqrt[3]{(\mathcal{O}_F^\times)^{\tau-1}})$.  One can easily compute that $\zeta_9$ is in this field, and that the degree of the extension $F(\sqrt[3]{(\mathcal{O}_F^\times)^{\tau-1}})/F(\zeta_9)$ is 3.  Thus, one gets that, among all primes $\pi \equiv 1 \pmod{\lambda^3}$, two-thirds of them don't have $\left(\frac{\pi}{17}\right)_3 = 1$ and thus have a solution to the $\zeta_3$-Pell equation, two-thirds of the remainder do have $\left(\frac{\pi}{17}\right)_3 = 1$ but don't split in $F(\sqrt[3]{(\mathcal{O}_F^\times)^{\tau-1}})$ and so don't have a solution to the $\zeta_3$-Pell equation.  The authors as of right now have no ideas for how to get deeper into this set.\\

To set up the full generality, we will consider fields of the form $L_\pi = K(\sqrt[3]{\alpha\pi})$ with $\alpha = \pi_1^{a_1} \cdots \pi_n^{a_n}$.  Let $F = K(\sqrt[3]{\pi_1}, \ldots, \sqrt[3]{\pi_n})$, and write $\Gal(F/K) = \langle \tau_1, \ldots \tau_n\rangle$ with $\tau_i(\sqrt[3]{\pi_i}) = \zeta_3 \sqrt[3]{\pi_i}$ and $\tau_i(\sqrt[3]{\pi_j}) = \sqrt[3]{\pi_j}$ for $i \ne j$.  Choose a matrix $M = (b_{ij})$ such that $b_{ij} = \log\left(\left(\frac{\pi_i}{\pi_j}\right)_3\right)$ for $i \neq j$.  Writing $M_\pi$ for the matrix associated to the pairing as constructed in subsection \ref{redei}, one has that there is an element $\tau$ in $\Gal(F/K)$ such that $\Frob_{\pi} = \tau$ if and only if $M_\pi = M$.  The main theorem is as follows:
\begin{theorem} \label{gov field} 
There is a field $F_\alpha/F(\zeta_9)$ of degree $\dim(\ker(M))$ such that $F_\alpha$ is Galois over $K$, and, for all $\pi$ with $M_\pi = M$, one (equivalently any) prime $\mathfrak{P}$ lying over $\pi$ in $F$ splits in $F_\alpha$ if and only if $\dim((\sigma - 1)\Cl(L_\pi)[(\sigma-1)^2]) = \dim(\ker(M))$.
\end{theorem}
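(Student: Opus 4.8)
The plan is to run the ``governing field'' argument of \cite{FK1} (ultimately \cite{Smith1}) in the cubic setting: the point is to re-express the class group invariant $\dim((\sigma-1)\Cl(L_\pi)[(\sigma-1)^2])$, which a priori fluctuates wildly with $\pi$, as a splitting condition on $\pi$ in one fixed number field depending only on $\alpha$ and $M$. \textbf{Step 1 (reduction to an unramified extension).} Fix $\pi$ with $M_\pi=M$, write $L=L_\pi$ and $\langle\sigma\rangle=\Gal(L/K)$, and let $H=F(\sqrt[3]{\pi})=FL$; by Corollary \ref{GenusField} applied to $\alpha\pi=\pi_1^{a_1}\cdots\pi_n^{a_n}\cdot\pi$, this is the maximal unramified $(\sigma-1)$-cotorsion extension of $L$, so $\Gal(H/L)\cong\Cl(L)/(\sigma-1)$ and, in the notation of \S\ref{redei}, $M$ is the matrix of the composite $\mathbb{F}_3^n\to\Cl(L)[\sigma-1]\to\Cl(L)/(\sigma-1)$ against the $\chi_j$, so $\ker M$ is carried by the classes $c_v=[\prod_i\mathfrak{P}_i^{v_i}]$. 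Using the $R_\sigma$-module structure theory of \S\ref{redei} one checks that $\dim\ker M\geq\dim((\sigma-1)\Cl(L)[(\sigma-1)^2])$ in all cases, with equality exactly when the next graded piece $\Gal(H_2/H)$ of the genus tower $L\subset H\subset H_2\subset\cdots$ of $L$ realizes all of $\ker M$; concretely, equality holds iff, for each $v$ in a basis of $\ker M$, the class $c_v$ survives in $\Cl(L)/(\sigma-1)^2$ — equivalently there is $\beta_v\in H^\times$ with $\beta_v\mathcal{O}_H$ in the class $c_v$ and $H(\sqrt[3]{\beta_v})/H$ unramified, these being independent over $H$.

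\textbf{Step 2 (descent to $F$).} Since $H=F(\sqrt[3]{\pi})$ is ramified over $F$ only at $\pi$, class field theory over $F$ of conductor $\pi$ (where both $\mathcal{O}_F^\times$ and $\Cl(F)$ enter) converts the existence and independence of the extensions demanded in Step 1 into the statement that every element of an explicit finite set $\Sigma_\alpha\subset F^\times$, of rank $\dim\ker M$ modulo cubes, is a cube in $F_{\mathfrak{P}}$ for every prime $\mathfrak{P}\mid\pi$ of $F$. The set $\Sigma_\alpha$ is assembled from (i) twisted units $\epsilon^{\tau_j-1}$ for $\epsilon$ running over a basis of $\mathcal{O}_F^\times$ — these record the ambiguity of the $\beta_v$ up to units of $F$ and, exactly as in the worked case $L=K(\sqrt[3]{17\pi})$ above, force $\zeta_9=\sqrt[3]{\zeta_3}$ into the construction — and (ii) the $\pi_i$-monomials $\prod_i\pi_i^{v_i}$ indexed by a basis of $\ker M$ together with generators of the associated principal ideals of $F$. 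Crucially, once $M_\pi=M$ is fixed — equivalently $\Frob_\pi|_F=\tau$ — all local conditions away from $\pi$, in particular at $\lambda$ (where $\pi_i\equiv 1\pmod{\lambda^3}$ and $N_{K/\mathbb{Q}}(\pi)\equiv 1\pmod 9$ put $\zeta_9$ in the relevant completions), are automatically met, so $\Sigma_\alpha$ may be taken to depend on $\alpha$ and $M$ alone.

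\textbf{Step 3 (the governing field).} Set $F_\alpha=F\bigl(\zeta_9,\,\sqrt[3]{\gamma}:\gamma\in\Sigma_\alpha\bigr)$. Then $F_\alpha\supseteq F(\zeta_9)$ by construction; $F_\alpha/K$ is Galois because $\Sigma_\alpha$ is permuted up to cubes by $\Gal(F/K)$ and $F(\zeta_9)/\mathbb{Q}$ is Galois; and $\Gal(F_\alpha/F(\zeta_9))$ has the asserted rank $\dim\ker M$ by a dimension count on $\Sigma_\alpha$ modulo cubes, after dividing out by the relation $\sqrt[3]{\alpha\pi}=\prod_i(\sqrt[3]{\pi_i})^{a_i}\cdot\sqrt[3]{\pi}$. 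For a prime $\mathfrak{P}\mid\pi$ of $F$ (unramified in $F/K$ since $M_\pi=M$), ``every $\gamma\in\Sigma_\alpha$ is a cube in $F_{\mathfrak{P}}$'' is exactly ``$\mathfrak{P}$ splits completely in $F_\alpha$''; combining with Steps 1--2 gives the equivalence in the theorem, and exhibits the condition as a Chebotarev condition in $\Gal(F_\alpha/K)$, as needed for the analytic input later.

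\textbf{The crux} is Step 2: showing that an invariant as unstable as the depth of the $(\sigma-1)$-filtration of $\Cl(L_\pi)$ is, once $M_\pi$ is fixed, controlled by a single field $F_\alpha$ — i.e. that the remaining $\pi$-dependence collapses onto the splitting of $\pi$. This forces one to choose the generators $\beta_v$ (and their images in $F$) uniformly in $\pi$, to verify that the boundary local conditions at $\lambda$ and at the $\pi_i$ are genuinely inert to the choice of $\pi$, and to track how solubility of the $\zeta_3$-Pell equation for $L_\pi$ — whether the $\mathfrak{P}_i$ already generate $\Cl(L_\pi)[\sigma-1]$ — intervenes in the comparison between $\ker M$ and $\dim((\sigma-1)\Cl(L_\pi)[(\sigma-1)^2])$; this last point is handled with the exact sequences and dimension counts of Theorem \ref{Zeta3PellCondition1}. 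The simplification over \cite{FK1} is that $K$ has no real places and no narrow class group, so the archimedean bookkeeping of the negative Pell case is absent.
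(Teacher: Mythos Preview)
Your overall architecture is right---translate the $(\sigma-1)^2$-depth of $\Cl(L_\pi)$ into a splitting condition on $\pi$ in a Kummer extension of $F$ built from twisted units---and you correctly identify that $\zeta_9$ is forced in. But two points in Step~2 and Step~3 are genuine gaps. First, the $\pi_i$-monomials you put into $\Sigma_\alpha$ are vacuous: $F=K(\sqrt[3]{\pi_1},\ldots,\sqrt[3]{\pi_n})$, so every $\prod_i\pi_i^{v_i}$ is already a cube in $F^\times$, and ``generators of the associated principal ideals of $F$'' does not name any new element modulo cubes. The governing field is generated over $F$ purely by cube roots of twisted global units; nothing coming from the $\pi_i$ survives. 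Second---and this is the real hole---you assert the degree $[F_\alpha:F(\zeta_9)]=3^{\dim\ker M}$ by a ``dimension count on $\Sigma_\alpha$ modulo cubes'' that is never carried out. Getting this degree is the entire content of the theorem, and it rests on a fact you do not invoke: the $\mathbb{F}_3[\Gal(F/K)]$-module $\mathcal{O}_F^\times\otimes_{\mathbb{Z}}\mathbb{F}_3$ is isomorphic to the augmentation ideal $\mathfrak{m}\subset\mathbb{F}_3[\Gal(F/K)]$. Without this Minkowski-unit--type structural input, you cannot conclude that the relevant twisted units are independent modulo cubes, nor that there are exactly $\dim\ker M$ of them beyond $\zeta_3$.

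The paper's argument sidesteps your $c_v,\beta_v$ bookkeeping entirely. It introduces the auxiliary ring $R=\mathbb{F}_3+W$ (with $W=\coker(r_1)$ the part of $\Gal(F/K)$ not hit by the R\'edei map) and works with $V_\pi'=\bigl((\mathcal{O}_F/\pi)^\times\otimes\mathbb{F}_3\bigr)\otimes_{\mathbb{F}_3[\Gal(F/K)]}R$; class field theory gives $V_\pi'\twoheadrightarrow\Gal(L_2/F)$ with kernel the image of $\mathcal{O}_F^\times$, so the equality $\dim((\sigma-1)\Cl(L_\pi)[(\sigma-1)^2])=\dim\ker M$ holds iff $\mathcal{O}_F^\times$ maps to zero in $V_\pi'$, i.e.\ iff $r\cdot u$ is a cube mod $\pi$ for every $r\in\ker(\mathbb{F}_3[\Gal(F/K)]\to R)$ and $u\in\mathcal{O}_F^\times$. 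The degree then drops out of $\mathfrak{m}\otimes_{\mathbb{F}_3[\Gal(F/K)]}R=W$, which has dimension $\dim\ker M$. Your route through $H=FL$ and explicit $\beta_v$'s could in principle be made to work, but you would still need the unit-module computation, and you would have to explain why the $\beta_v$ can be chosen uniformly in $\pi$---which is exactly what the $R$-module formalism accomplishes in one stroke.
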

\begin{proof}
What follows will be essentially the same argument, but with more notation necessary.  Let $r_1: \mathbb{F}_3\langle e_1, \ldots, e_n \rangle \rightarrow \Cl(L)/(\sigma-1)$ sending $e_i$ to the class associated to the prime over $\pi_i$.  The assumptions on $\pi$ imply that the cokernel of $r_1$ is independent of $i$, and so we may view $W := \coker(r_1)$ as a fixed subgroup of $\Gal(F/K)$.  Let $R = \mathbb{F}_3 + W$, a ring where one defines multiplication by setting $\tau \tau' = 0$ for any two $\tau, \tau' \in W$.  $R$ is naturally a quotient of the group ring $\mathbb{F}_3[\Gal(F/K)]$.\\

Define $V_\pi := (\mathcal{O}_F/\pi)^\times \otimes_\mathbb{Z} \mathbb{F}_3$ and $V_\pi' = V_\pi \otimes_{\mathbb{F}_3[\Gal(F/K)]} R$.  The assumptions on $\pi$ imply that $V_\pi'$ is isomorphic to $R$ as an $R$-module.  Write $L_2$ to be the maximal abelian extension of $L$ unramified everywhere and whose Galois group is $(\sigma-1)^2$-torsion.  One has then that $L_2/F$ is abelian and unramified outside of $\pi$, and $\Gal(L_2/F)$ is an $R$-module.  Class field theory gives a map from $V_\pi' \rightarrow \Gal(L_2/F)$ which is surjective, and whose kernel is the image of global units.  However, one has that $\dim_{\mathbb{F}_3}(\Gal(L_2/F)) = \dim((\sigma - 1)\Cl(L_\pi)[(\sigma-1)^2]) + 1$, so one has $\dim((\sigma - 1)\Cl(L_\pi)[(\sigma-1)^2]) = \dim(\ker(M))$ if and only if there is no kernel in the map from $V_\pi' \rightarrow \Gal(L_2/F)$.  Asking that the image of $\mathcal{O}_F^\times$ in $V_\pi'$ is trivial is just asking that $r \cdot u$ is a cube for all $r \in \ker(\mathbb{F}_3[\Gal(F/K)] \rightarrow R)$ and $u \in \mathcal{O}_F^\times$.  This gives the existence of the field $F_\alpha/F$ by taking the corresponding Kummer extension; all that's left is to compute the degree.\\

Now, the structure of $\mathcal{O}_F^\times \otimes_\mathbb{Z} \mathbb{F}_3 = \mathfrak{m}$ as an $\mathbb{F}_3[\Gal(F/K)]$-module where $\mathfrak{m}$ is the unique maximal ideal in $\mathbb{F}_3[\Gal(F/K)]$.  Asking that $(\mathcal{O}_F^\times \otimes_\mathbb{Z} \mathbb{F}_3) \otimes_{\mathbb{F}_3[\Gal(F/K)]} R$ maps to $0$ in $V_\pi'$ is imposing $\dim(W)$ different conditions, as $\mathfrak{m} \otimes_{\mathbb{F}_3[\Gal(F/K)]} R = W$.  That gives the degree of $F_\alpha$ and completes the proof of the Theorem.
\end{proof}

At this point, one can imagine how the proof of the main theorem goes.  Firstly, one wants to show that the matricies $M_L$ look like large random symmetric matricies over $\mathbb{F}_3$.  This gives the lower bound on the number of fields that solve the $\zeta_3$-Pell equation.  This also gives a distribution on the dimension of the kernels of the matricies $M_L$, and the next step is to show that for fields $L$ with $\dim(\ker(M_L)) = d$, there is a $\frac{3^d - 1}{3^d}$ chance that Theorem \ref{NegativeCondition} applies.  This gives the upper bound.

\section{Distribution of the $3$-rank}\label{Analysis}
This Section consists of two parts.  First, we compute the ranks of the matrices $M_L$ for varying $L$, and then we use that to compute the dimensions of $(\sigma - 1)(\Cl(L)[(\sigma-1)^2])$ for varying $L$.

\subsection{Ranks of Matricies}
For an element $s \in \bZ[\zeta_3]$, we denote by $N(s)$ its norm. We will require the following Lemma regarding cubic character sums: 

\begin{lemma} \label{double osc} Let $\{\alpha_m\}, \{\gamma_n\}$ be two complex sequences indexed by the Eisenstein integers such that $|\alpha_m|, |\gamma_n| \leq 1$ for all $m, n \in \bZ[\zeta_3]$. Let $\mu$ be the natural extension of the M\"{o}bius function to $\bZ[\zeta_3]$. Put
\[\Xi(M,N, \alpha, \beta) = \sum_{N(m) \leq M} \sum_{N(n) \leq N} \alpha_m \gamma_n \mu^2(m) \mu^2(n) \left(\frac{m}{n}\right)_3, \]
and 
\[S_1(M,N) = N^{-1/2} + M^{-1/4} N^{1/2}, S_2(M,N, \ep) = M^\ep \left(N^{-1/8} + M^{-1/4} N^{1/8} \right).\]
Then for all $\ep > 0$
\begin{equation} \Xi(M,N, \alpha, \beta) = O_\ep \left(MN \min\left\{S_1(M,N), S_1(N,M), S_2(M,N,\ep), S_2(N,M, \ep) \right\}\right).
\end{equation} 

\end{lemma}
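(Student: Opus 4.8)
The plan is to prove Lemma \ref{double osc} by a standard large-sieve / double-oscillation argument for the cubic residue symbol, reducing to the cubic analogue of the P\'olya--Vinogradov inequality and the cubic large sieve. First I would invoke cubic reciprocity: for $m, n \in \bZ[\zeta_3]$ coprime to $3$ and coprime to each other, $\left(\frac{m}{n}\right)_3 = \left(\frac{n}{m}\right)_3$ up to a correcting unit root of unity that depends only on $m, n$ modulo $\lambda^3$ (equivalently modulo $9$); this symmetry is what lets us freely swap the roles of $M$ and $N$, which already explains the presence of $S_1(N,M)$ and $S_2(N,M,\ep)$ alongside $S_1(M,N)$, $S_2(M,N,\ep)$ in the conclusion. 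After this reduction it suffices to bound $\Xi$ by $MN \cdot \min\{S_1(M,N), S_2(M,N,\ep)\}$ and then appeal to symmetry for the other two terms.

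The estimate involving $S_1(M,N) = N^{-1/2} + M^{-1/4}N^{1/2}$ is the ``easy'' bound, obtained as follows. For the term $N^{-1/2}$: fix $m$ squarefree and consider the inner sum $\sum_{N(n)\le N} \gamma_n \mu^2(n)\left(\frac{m}{n}\right)_3$. Since $n\mapsto \left(\frac{m}{n}\right)_3$ is (after accounting for the unit at $\lambda$) a non-principal character modulo $m$ whenever $m$ is not a cube times a unit, the cubic P\'olya--Vinogradov inequality gives cancellation of size $O(N^{1/2+o(1)})$ in $n$ (and when $m$ \emph{is} a cube the whole sum is handled separately as it contributes a negligible density). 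Summing trivially over the $O(M)$ values of $m$ gives $O_\ep(M N^{1/2+\ep})$, which after dividing by the trivial bound $MN$ is $O_\ep(N^{-1/2+\ep})$; one absorbs the $\ep$ by the $M^\ep$ factor already present in $S_2$ or by a mild adjustment. For the term $M^{-1/4}N^{1/2}$ one instead applies the \emph{cubic large sieve} inequality: $\sum_{N(n)\le N}\mu^2(n)\left|\sum_{N(m)\le M}\alpha_m\mu^2(m)\left(\frac{m}{n}\right)_3\right|^2 \ll_\ep (MN)^\ep (M+N) M$, and then Cauchy--Schwarz against $\sum_{N(n)\le N}|\gamma_n|^2 \ll N$ yields $\Xi \ll_\ep (MN)^\ep (M+N)^{1/2} M^{1/2} N^{1/2}$; combined with the previous bound and after taking the minimum this matches $MN\cdot S_1(M,N)$ up to $\ep$'s.

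The bound involving $S_2(M,N,\ep) = M^\ep(N^{-1/8} + M^{-1/4}N^{1/8})$ is the genuinely harder, ``reflected'' estimate and is where I expect the main obstacle to lie: it requires extracting extra cancellation beyond what P\'olya--Vinogradov and the large sieve give individually, typically via a second application of reciprocity combined with a van der Corput / $q$-analogue of Weyl-differencing step on the conductors, following the treatment of such double character sums in the analytic literature (this is precisely the type of input referenced in the introduction when the authors say the relevant estimates are ``classical'' — one should cite the cubic large sieve of Heath-Brown and the bilinear-forms refinements). The strategy is: smooth the summation over $n$, open $\left(\frac{m}{n}\right)_3$ via reciprocity to convert it into $\left(\frac{n}{m}\right)_3$ times a unit twist, Poisson-sum (or use the large sieve) in the now-shorter variable, and iterate once; the exponents $1/8$ and $1/4$ are exactly what such a single differencing step produces from the $1/2$ and $1/4$ of the first bound. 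Finally I would assemble the four bounds, take the minimum, and verify the claimed shape; the squarefree weights $\mu^2(m),\mu^2(n)$ are used only to guarantee that $\left(\frac{m}{n}\right)_3$ is genuinely a non-principal character in the generic case, and their contribution to error terms is absorbed into the $M^\ep$ factors throughout.
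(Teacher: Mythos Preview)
The paper gives no proof of its own here: the entire argument is the citation ``See Proposition~9 in \cite{FK2} and Remark~6.3, as well as \cite{HB}.'' Your broad plan --- cubic reciprocity to obtain the $M\leftrightarrow N$ symmetry, then large-sieve and P\'olya--Vinogradov type inputs for $S_1$, with a further iteration for $S_2$ --- is in line with how those references proceed, so the architecture is sound.

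The execution, however, has real gaps. Your derivation of the $N^{-1/2}$ term is incorrect: you assert that for fixed $m$ the inner sum $\sum_{N(n)\le N}\gamma_n\mu^2(n)\bigl(\tfrac{m}{n}\bigr)_3$ is $O(N^{1/2+o(1)})$ by P\'olya--Vinogradov, but P\'olya--Vinogradov neither applies to sums carrying arbitrary coefficients $\gamma_n$, nor does it return the square root of the \emph{length}; it gives the square root of the \emph{conductor}, here $N(m)\le M$. The saving $\Xi\ll MN^{1/2}$ actually comes from Cauchy--Schwarz in $n$ followed by the large sieve, which already delivers the $N^{-1/2}$ term you attribute to P\'olya--Vinogradov. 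Your large-sieve display also drops the $(MN)^{2/3}$ term that is genuinely present in Heath-Brown's cubic large sieve \cite{HB} and cannot be ignored in the relevant ranges. Moreover, the computation you wrote does not produce $M^{-1/4}N^{1/2}$: Cauchy--Schwarz plus the large sieve gives $(M+N)^{1/2}(MN)^{1/2}$, which is $MN^{1/2}$ or $M^{1/2}N$ according to which variable dominates, not $M^{3/4}N^{3/2}$. Finally, your treatment of $S_2$ is too schematic to count as a proof; the $1/8$ exponents arise from the recursive machinery in \cite{HB}, not from a single differencing step. The strategy is right, but as written the details do not go through --- follow \cite{FK2} and \cite{HB} for the actual arguments.
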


\begin{proof} See Proposition 9 in \cite{FK2} and Remark 6.3, as well as \cite{HB}. 
\end{proof}

We put 
\begin{equation} S(X) = \{L = K(\sqrt[3]{\alpha}) : |\Delta_L| \leq X, \pi | \alpha \Rightarrow N(\pi) \equiv 1 \pmod{9}\}
\end{equation}
and for $n \geq 1, n \in \bZ$,
\begin{equation} S_n(X) = \{L \in S(X) : \dim \ker M_L = n\}.
\end{equation}
We wish to show that the limit
\[\lim_{X \rightarrow \infty} \frac{|S_n(X)|}{|S(X)|}\]
exists and is equal to the corresponding limit for random $\bF_3$-matrices.  Precisely, one defines $\beta_n := \frac{\beta}{\prod_{j = 1}^n (3^j - 1)}$, and we wish to show
\begin{theorem}
One has that 
$$\lim_{X \rightarrow \infty} \frac{|S_n(X)|}{|S(X)|} = \beta_n.$$
\end{theorem}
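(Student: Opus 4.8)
The plan is to compute the moments of the rank distribution of the Rédei matrices $M_L$ over $L \in S(X)$ and match them against the known moments of the symmetric-matrix model over $\bF_3$. Concretely, for each $k \geq 0$ I would estimate the average number of surjections (or injections) from a fixed $k$-dimensional $\bF_3$-space onto $\coker(M_L)$ — equivalently, weighted counts of $k$-tuples of characters lying in the kernel of the pairing — and show these averages converge to the corresponding averages for a Haar-random symmetric matrix of growing size. Since the distribution of $\dim\ker$ for large random symmetric matrices over $\bF_3$ is determined by these moments (and is rigid — the moment problem here is determinate), convergence of all moments forces $\lim_X |S_n(X)|/|S(X)| = \beta_n$.

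**First I would** set up the parametrization: an $L \in S(X)$ corresponds to $\alpha = \pi_1^{a_1}\cdots\pi_n^{a_n}$ with each $N(\pi_i) \equiv 1 \pmod 9$ and $a_i \in \{1,2\}$, and $|\Delta_L| \leq X$ translates into a constraint on $\prod N(\pi_i)$. The entries $b_{ij} = \log\left(\left(\frac{\pi_i}{\pi_j}\right)_3\right)$ of $M_L$ are governed by cubic residue symbols, which by cubic reciprocity (the $\pi_i \equiv 1 \pmod{\lambda^3}$ normalization kills the correction factors) are \emph{symmetric}: $\left(\frac{\pi_i}{\pi_j}\right)_3 = \left(\frac{\pi_j}{\pi_i}\right)_3$. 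So $M_L$ is genuinely a symmetric matrix over $\bF_3$, and the diagonal entries $b_{ii}$ are determined by the off-diagonal ones via the relation coming from $\mathfrak{P}_1^{a_1}\cdots\mathfrak{P}_n^{a_n} = \sqrt[3]{\alpha}\,\mathcal{O}_L$. Then I would express "$\dim\ker M_L \geq k$" and its weighted refinements as a sum over $k$-dimensional subspaces $V$ of the ambient $\bF_3^{n-1}$ of indicator functions that certain products of cubic symbols all equal $1$; opening these up gives character sums over the $\pi_i$.

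**The main input** is Lemma~\ref{double osc}: the double oscillation estimate for $\sum_m \sum_n \alpha_m \gamma_n \mu^2(m)\mu^2(n)\left(\frac{m}{n}\right)_3$ lets me show that, after fixing the subspace data, the cubic symbols behave like independent random signs on average — the "diagonal" term (where the character sum condition is automatically satisfied for structural reasons) gives the main contribution, and all "off-diagonal" contributions are killed by the cancellation in the Lemma, with enough uniformity to sum over the $\binom{n}{k}_3$-many subspaces and over the range $n \leq (\log X)/(\log 9) + O(1)$ of relevant $\alpha$ with boundedly many prime factors. One must be careful that $n$ grows (slowly) with $X$, so the number of subspaces grows and the error terms must be genuinely power-saving in $N(\pi_i)$, not just $\log$-power-saving — this is why the $M^\epsilon(N^{-1/8} + \cdots)$ shape of $S_2$ matters. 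I would then assemble the $k$-th moment $\sum_V (\text{average})$ and check it converges to $\sum_{j} 3^{-\binom{j}{2}}\binom{k}{j}_3 \cdot (\text{appropriate factor})$, the moment of the $\bF_3$-symmetric Cohen–Lenstra-type distribution whose rank-defect density is exactly $\beta_n$.

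**The hard part will be** controlling the analytic estimates uniformly as the number of prime factors $n \to \infty$, so that the combinatorial sum over subspaces (of size $\sim 3^{n^2/4}$-ish after weighting) does not swamp the power-saving cancellation, and in handling the "boundary" $\alpha$'s — those with an atypical number or size of prime divisors, which must be shown to be negligible using standard sieve/analytic bounds on the number of Eisenstein integers with a prescribed number of prime factors below $X$. A secondary subtlety is the determinacy of the moment problem for the limiting distribution on $\bZ_{\geq 0}$ (rank defects): one needs that the $\beta_n$ are the unique probabilities with the computed moments, which follows because the generating function $\sum \beta_n u^n$ is entire of order $0$ and the moments grow slowly enough — I would cite the standard Cohen–Lenstra moment-uniqueness argument (as in the symmetric case treated in \cite{FK1}) rather than reprove it. Everything else — symmetry of $M_L$ from cubic reciprocity, the structural main term, matching constants — is bookkeeping once the uniform oscillation estimate is in hand.
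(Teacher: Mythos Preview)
Your approach is correct in outline, but it differs from the paper's. The paper does \emph{not} go through moments: instead it proves direct equidistribution of the matrix $M_L$ itself. Concretely, for each fixed symmetric $\bF_3$-matrix $A$ of the appropriate size, the paper expands the indicator $\Ind(M_L = A) = \prod_{i\le j} u_{ij}$ via (\ref{uij}) into a sum of products of cubic symbols as in (\ref{osc sum}), restricts to $\alpha$ with $r \approx \log\log X$ prime factors all exceeding $X^\dagger = \exp(\exp(\sqrt{\log\log X}))$ (Lemma~\ref{TT comp}), and then applies Lemma~\ref{double osc} to kill every term with some exponent $b_{ij}\neq 0$. What remains is the single ``main'' term $b_{ij}\equiv 0$, which is manifestly independent of $A$; hence $M_L$ is asymptotically uniform over symmetric matrices, and the rank statistics are then read off from Proposition~9 of \cite{SteRedei}. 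Your plan uses the same analytic engine (Lemma~\ref{double osc}) and the same reduction to typical $\alpha$, but packages it as a moment calculation plus a moment-determinacy step. The paper's route is slightly stronger (full equidistribution of $M_L$, not merely of $\dim\ker M_L$) and cleaner, since it sidesteps the determinacy argument entirely; your route has the advantage of plugging into the standard Cohen--Lenstra moment machinery, which generalizes more readily. One caution: your parenthetical ``$\sim 3^{n^2/4}$-ish'' count of subspaces is the total over all $k$, whereas in the moment method you work one $k$ at a time and only face $\binom{n-1}{k}_3 = O(3^{kn})$ subspaces; with $n\asymp \log\log X$ this is harmless against the super-polylogarithmic saving from Lemma~\ref{double osc}, but be sure you are not trying to sum all moments simultaneously. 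Also, the reference to \cite{FK1} for moment-uniqueness is slightly off --- Fouvry--Kl\"uners likewise argue by direct equidistribution rather than moments --- so you would want to cite the determinacy argument from elsewhere.
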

That $\beta_n$ gives the right distribution is classical, and can be seen directly from Proposition 9 in \cite{SteRedei}.
To wit, we decompose $S_n(X)$ as
\begin{align*} S_n(X) & = \bigcup_{r \geq 0} \{L : |\Delta_L| \leq X, \dim \ker M_L = n, \omega(\alpha) = r+1\} \\
& = \bigcup_{r \geq 0} S_n^{(r)}(X) \\
& = \bigcup_{r \geq 0} \bigcup_{\substack{M \text{ symmetric } r \times r \text{ } \bF_3 \text{ matrix} \\ \rk M = n - r}} \{L : |\Delta_L| \leq X, M_L = M, \omega(\alpha) = r + 1\}.
\end{align*}
We denote by $S_n^{(r)}(M; X)$ a term in the last line. \\

We wish to write down the condition $M_L = M$ in terms of cubic symbols. For a fixed $a_{ij} \in \bF_3$, we have that the indicator function for $a_{ij} = m_{ij}$ is given by
\begin{equation} \label{uij} u_{ij} = \frac{1 + \zeta_3^{-a_{ij}} \left(\frac{\pi_i}{\pi_j}\right)_3 + \zeta_3^{-2a_{ij}} \left(\frac{\pi_i}{\pi_j}\right)_3^2 }{3}.\end{equation}

We may assume going forward that the number of prime factors is bounded by
\begin{equation} \label{r bd} \log \log X - (\log \log X)^{3/4} < r < \log \log X + (\log \log X)^{3/4}.\end{equation} 
This follows from an appropriate application of the Selberg-Delange method, and a corresponding refinement to Theorem 5 in \cite{Ten}. From here, we shall estimate $S_n^{(r)}(A; X)$ individually, being careful to produce error terms that depend explicitly on $r$ and otherwise uniform. \\

We put
\[A = \left(a_{ij} \right), a_{ij} \in \bF_3, a_{ij} = a_{ji}, 1 \leq i, j \leq r \]
for a fixed $r \times r$ symmetric $\bF_3$-matrix. Recall from (\ref{uij}) that we can write
\[\Ind(M_L = A) = \prod_{i \leq j} u_{ij}. \]
Expanding, we see that we are left to deal with sums of the shape
\begin{equation} \label{osc sum} \sum_{N(\pi_1 \cdots \pi_r) \leq X} 3^{-\frac{r^2 + r}{2}} \prod_{i \leq j} \left(\frac{\pi_i}{\pi_j} \right)_3^{b_{ij}},
\end{equation}
where $b_{ij} \in \{0,1,2\}$. Observe that the term with $b_{ij} = 0$ for all $i,j$ is expected to be the main term. \\

Put
\begin{equation} T_r(D;X) = \{\alpha = \pi_1 \cdots \pi_r \leq X : N(\pi_i) > D, N(\pi_i) \equiv 1 \pmod{9}  \text{ for } 1 \leq i \leq r\}.
\end{equation}
In other words, $T_r(D;N)$ is the set of elements in $\O_K$ having exactly $r$ prime factors (up to multiplicity) and norm bounded by $X$, such that the norm of each prime factor exceeds $D$ and is congruent to $1 \pmod{9}$. \\

For an element $n = p_1 \cdots p_r \in T_r(D;X)$, put
\[d_{2,r} = d_{2,r}(n) = \prod_{2 \leq i \leq r} N(\pi_i).\]
Let $\T_r(D;X)$ denote the subset of $T_r(D;X)$ having the property that
\[d_{2,r} > X e^{-\exp\left(\sqrt{\log \log X}\right)}.\]

We require:

\begin{lemma} \label{TT comp} Let $\sideset{}{_r^\dagger} \sum$ denote summating over the range 
\[\log \log X - (\log \log X)^{3/4} < r < \log \log X + (\log \log X)^{3/4}.\]
Then 
\[\lim_{N \rightarrow \infty} \frac{\sideset{}{_r^\dagger} \sum \left \lvert  \T_r(D;X) \right \rvert}{\sideset{}{_r^\dagger} \sum |T_r(D;X)|} = 0.\]
\end{lemma}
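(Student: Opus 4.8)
The plan is to show that the elements of $T_r(D;X)$ whose second-through-last prime factors have product $d_{2,r}$ that is \emph{not} essentially as large as $X$ form a negligible fraction, uniformly over $r$ in the relevant dyadic-type window. Equivalently, the ``bad'' set $\T_r(D;X)$ consists of those $n = \pi_1 \cdots \pi_r$ for which $N(\pi_1) = N(n)/d_{2,r} > e^{\exp(\sqrt{\log\log X})} =: Y$, i.e. the largest prime factor $\pi_1$ is unusually large (of norm exceeding $Y$, which grows faster than any fixed power of $\log X$ but is still $X^{o(1)}$). So the first step is to rewrite the numerator of the ratio as a sum over $n$ with exactly $r$ prime factors of norm $> D$, each $\equiv 1 \pmod 9$, norm $\le X$, and with the top prime factor of norm $> Y$.

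The main estimate would then be a Mertens-type/Rankin upper bound for such counts. Write $n = \pi_1 m$ with $N(\pi_1) > Y$ and $m \in T_{r-1}(D; X/N(\pi_1))$ having all prime-factor norms $\le N(\pi_1)$. First I would bound $|T_{r-1}(D; Z)|$ from above by the standard Landau-type estimate $|T_{r-1}(D;Z)| \ll \frac{Z}{\log Z}\cdot \frac{(\log\log Z + O(1))^{r-2}}{(r-2)!}$ (this is where the Selberg--Delange / Tenenbaum machinery already invoked in the paper, specifically the refinement of Theorem 5 in \cite{Ten}, supplies the needed uniformity in $r$ over the window (\ref{r bd})). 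Summing over $\pi_1$ with $Y < N(\pi_1) \le X$ using the prime ideal theorem in $\O_K$ gives
\[
\sideset{}{_r^\dagger}\sum |\T_r(D;X)| \ \ll\ X \sideset{}{_r^\dagger}\sum \frac{(\log\log X)^{r-2}}{(r-2)!} \sum_{Y < N(\pi_1) \le X} \frac{1}{N(\pi_1)\log N(\pi_1)} \ \ll\ X (\log\log X - \log\log Y)\, \sideset{}{_r^\dagger}\sum \frac{(\log\log X)^{r-2}}{(r-2)!\,\log X},
\]
where the inner sum over $\pi_1$ contributes $\log\log X - \log\log Y + O(1) = \log\log X - \sqrt{\log\log X} + O(1)$ after Mertens. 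Meanwhile the denominator $\sideset{}{_r^\dagger}\sum |T_r(D;X)|$ is, by the same Landau-Selberg--Delange asymptotic, of exact order $X \sideset{}{_r^\dagger}\sum \frac{(\log\log X)^{r-1}}{(r-1)!\,\log X} \asymp \frac{X}{\log X}$ (the full sum over all $r$ recovers $\pi_K(X) \sim X/\log X$, and the tails outside the window are negligible by the very choice of window, cf. the remark after (\ref{r bd})). Taking the ratio, the $(r-2)!$ versus $(r-1)!$ discrepancy together with $(\log\log X)^{r-2}$ versus $(\log\log X)^{r-1}$ means each term of the numerator is smaller than the corresponding term of the denominator by a factor $\asymp \frac{r-1}{\log\log X}\cdot(\log\log X - \sqrt{\log\log X})$; but $r - 1 = \log\log X + O((\log\log X)^{3/4})$ on the window, so this factor is $\asymp \log\log X - \sqrt{\log\log X} \cdot \frac{r-1}{\log\log X} + \cdots$. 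Here one must be slightly more careful: the honest bookkeeping shows the numerator is $\ll X(\log\log X)^{-1/2}\cdot X^{-1}\cdot(\text{denominator order})$ once one tracks that replacing $(r-1)!$ by $(r-2)!$ costs a factor $r-1 \sim \log\log X$ while the shorter Mertens sum only supplies $\log\log X - \sqrt{\log\log X}$ rather than $\log\log X$; the net gain is the factor $\sqrt{\log\log X}/\log\log X = (\log\log X)^{-1/2} \to 0$. Hence the ratio tends to $0$.

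The step I expect to be the main obstacle is making the uniformity in $r$ genuinely rigorous: the clean asymptotics for $|T_r(D;Z)|$ hold with good error terms only when $r$ is close to $\log\log Z$, and here $Z$ ranges down to $X/N(\pi_1)$ which can be as small as $X/Y = X^{1+o(1)}$ — fine — but also the inner count has $r-1$ prime factors all bounded by $N(\pi_1)$, a genuinely truncated problem, so one needs the Landau estimate with the secondary variable (the bound $D$ on the smallest factor, and the implicit upper truncation). The cleanest route is to drop the upper truncation (bounding $|\{m : m \text{ has } r-1 \text{ prime factors}, N(\pi)>D, N(m)\le Z\}|$ from above without the constraint that the factors are $\le N(\pi_1)$, which only inflates the count) and then invoke the uniform version of the Selberg--Delange estimate exactly as the paper already does around (\ref{r bd}) and in Lemma~\ref{double osc}'s surrounding discussion. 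Once the uniform upper bound for $|T_{r-1}(D;Z)|$ is in hand, the remaining computation is the elementary Mertens/Rankin manipulation sketched above, and one concludes by dominated-type comparison of the two $\dagger$-sums term by term.
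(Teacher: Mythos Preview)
Your overall strategy---isolate the distinguished prime $\pi_1$, bound the remaining $r-1$ factors by $|T_{r-1}(D;\,\cdot\,)|$, and compare via Mertens---is exactly the paper's approach. However, you have read the defining inequality of $\T_r(D;X)$ backwards, and this flips the conclusion.

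From $d_{2,r} > X\,e^{-\exp(\sqrt{\log\log X})}$ together with $N(n)=N(\pi_1)\,d_{2,r}\le X$ one obtains
\[
N(\pi_1)\ \le\ \frac{X}{d_{2,r}}\ <\ e^{\exp(\sqrt{\log\log X})}=:Y,
\]
so $\T_r(D;X)$ is the set where $N(\pi_1)$ is \emph{small}, not large. (This is consistent with the paper's subsequent restriction to $T_r^\ast(D;X)$, where every prime factor has norm exceeding $Y$.) Consequently the Mertens sum you need is
\[
\sum_{\substack{N(\pi_1)\le Y\\ N(\pi_1)\equiv 1\ (9)}}\frac{1}{N(\pi_1)}\ \sim\ \frac{1}{6}\log\log Y\ =\ \frac{1}{6}\sqrt{\log\log X},
\]
which is genuinely small compared with the full sum $\asymp\log\log X$. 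Plugging this into your scheme yields $|\T_r(D;X)|\ll r\,6^{-r}X(\log\log X)^{r-3/2}/\log X$, and the ratio with $|T_r(D;X)|\asymp r\,6^{-r}X(\log\log X)^{r-1}/\log X$ is $(\log\log X)^{-1/2}\to 0$, exactly as in the paper.

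With your reversed reading the Mertens contribution is $\log\log X-\sqrt{\log\log X}$, and the ratio you compute is $\dfrac{(r-1)\bigl(\log\log X-\sqrt{\log\log X}\bigr)}{\log\log X}\sim \log\log X$, which diverges rather than tending to $0$; the sentence ``the net gain is the factor $\sqrt{\log\log X}/\log\log X$'' does not follow from what precedes it. So the proposal as written does not prove the lemma, but once the inequality on $N(\pi_1)$ is corrected the remainder of your outline coincides with the paper's proof and goes through without further difficulty (the uniformity-in-$r$ issue you flag is handled, as you suggest, by the upper bound for $|T_{r-1}(D;X/N(\pi_1))|$ with $N(\pi_1)\le Y=X^{o(1)}$, so $\log(X/N(\pi_1))\sim\log X$).
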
 

\begin{proof} By the arguments given in Section 5 of \cite{Smith2}, we have the estimate
\[k_1 \frac{rX(\log \log X)^{r-1}}{6^r \log X} < |T_r(D;X)| < k_2 \frac{rX (\log \log X)^{r-1}}{6^r \log X}\]
for some positive numbers $k_1, k_2$ and sufficiently large $X$. Indeed, we can even take $D$ to be a slowly increasing function of $N$; taking $D = \max\{1, \log \log \log X\}$ suffices. \\

Using the bound 
\[|\T_r(D;X)| \leq \sum_{\substack{p_1 = 1 \\ p_1 \equiv 1 \pmod{9}}}^{\exp(\exp(\sqrt{\log \log X}))} T_{r-1}(D; X),\] 
we then see that
\begin{equation} |\T_r(D;X)| \leq \sum_{p_1} \frac{k_2 r(X/p_1) (\log \log X)^{r-2}}{6^r \log X} \ll \frac{rX(\log \log X)^{r-2}}{6^r \log X} \sum_{p_1}^{\exp(\exp(\sqrt{\log \log X}))} \frac{1}{p_1}.
\end{equation}
By Dirchlet's theorem applied to primes congruent to $1$ mod $9$, we find that
\[\sum_{\substack{p_1 = 1 \\ p_1 \equiv \pmod{9}}}^{\exp(\exp(\sqrt{\log \log X}))} \frac{1}{p_1} = \frac{(\log \log X)^{1/2}}{6} + B_0 + O\left((\log X)^{-1}\right),
\]
where $B_0$ is an absolute constant. It therefore follows that
\begin{equation} |\T_r(D;X)| = O\left(r 6^{-r} X (\log \log X)^{r-3/2} (\log X)^{-1}\right),
\end{equation}
and thus we see that 
\[\frac{|\T_r(D;X)|}{|T_r(D;X)|} = O \left((\log \log X)^{-1/2}\right),\]
which tends to $0$ as $X \rightarrow \infty$, as desired. 
\end{proof}
We may thus restrict our attention to the set:

\begin{equation} T_r^\ast(D;X) = \left \{m \in T_r(D;X) : p | m \Rightarrow p > \exp \left(\exp\left(\sqrt{\log \log X}\right)\right) \right\}.
\end{equation} 

Put
\begin{equation} \label{X dag} X^\dagger = \exp \left(\exp \left(\sqrt{\log \log X} \right) \right),
\end{equation}
and consider the contribution from those sums (\ref{osc sum}) such that there exist two indices $u, v$ for which $b_{uv} \in \{1,2\}$ and such that $B_u, B_v \gg X^\dagger$. Denote this subset by $S^\dagger(\BB; X)$. Put
\begin{equation} \Xi(u,v) = \prod_{w \ne u,v} \left(\frac{\pi_u}{\pi_w} \right)_3^{b_{uw}}.
\end{equation}
We then have the bound
\begin{equation} \left \lvert S^\dagger(\BB; X) \right \rvert \leq \sum_{\pi_w : w \ne u,v} \prod_{w \ne u,v} 3^{-(r^2 + r)/2} \left \lvert \sum_{\pi_u} \sum_{\pi_v} \Xi(u,v) \Xi(v,u) \left(\frac{\pi_u}{\pi_v} \right)_3^{b_{uv}}\right \rvert,
\end{equation}
and $b_{uv} \in \{1,2\}$. The upshot is that $ |\Xi(u,v)| \leq 1$ so Lemma \ref{double osc} applies. We thus obtain the bound, for all $\ep > 0$,
\[\left \lvert S^\dagger(\BB; X) \right \rvert \ll_\ep \left(\prod_{w \ne u,v} B_w \right) \left(B_u B_v \left(B_u^{-1/8 + \ep} + B_v^{-1/8 + \ep} \right) \right) \ll_\ep X \left(X^\dagger \right)^{-1/8 + \ep}, \]
which gives the bound
\begin{equation} \label{X dag bd} \left \lvert S^\dagger(\BB; X) \right \rvert \ll X \exp \left(-c \exp\left( \sqrt{\log \log X}\right) \right)
\end{equation}
for some $c > 0$. This is smaller than $X (\log X)^{-A}$ for any $A > 0$, which is enough for us. \\

We have thus shown that all sums of the form (\ref{osc sum}) with at least one pair of indices $\{i,j\}$ with $b_{ij} \ne 0$ contributes a negligible amount to $S_n^{(r)}(A; X)$, and hence only the main term where $b_{ij} = 0$ for all $i, j$ contributes.  

\subsection{Dimensions of Parts of Class Groups}
Our goal is to translate Theorem \ref{gov field} into a statement about the distribution of $3$-ranks of $L = K(\sqrt[3]{\alpha})$ over $K$, as $\alpha$ ranges over $\O_K$ with bounded norm. \\

We first extract a probabilistic consequence of Theorem \ref{gov field}:

\begin{proposition} \label{1 prime} Let $\alpha \in \O_K$ with $\alpha = \pi_1 \cdots \pi_r$. Put $L_\pi = K(\sqrt[3]{\alpha \pi})$. Fix a symmetric $r \times r$ $\bF_3$-matrix $M = (m_{ij})$, and suppose that 
\[m_{ij} = \log \left(\frac{\pi_i}{\pi_j}\right)_3 \text{ for } i \ne j.\]
Fix a coset $c \in \left(\O_K/(\alpha)\right)^\ast$ such that $\pi \in c$ if and only if $M_{L_\pi} = M$. Let $m = \dim \ker(M)$. Let $\L_c(X)$ be the set of primes $\pi \in \O_K$ with $N(\pi) \leq X$ and $N(\pi) \equiv 1 \pmod{9}$, with $\pi \in c$. Then
\[\lim_{X \rightarrow \infty} \frac{\left \lvert \pi \in \L_c(X) : \dim (\sigma - 1) \Cl(L_\pi)[(\sigma - 1)^2] = m \right \rvert}{\left \lvert \L_c(X) \right \rvert} =  3^{-m}. \]
Likewise, 
\[\lim_{X \rightarrow \infty} \frac{\left \lvert \pi \in \L_c(X) : \dim (\sigma - 1) \Cl(L_\pi)[(\sigma - 1)^2] \ne m \right \rvert}{\left \lvert \L_c(X) \right \rvert} =  1 - 3^{-m}. \]
\end{proposition}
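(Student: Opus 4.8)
\textbf{Proof proposal for Proposition \ref{1 prime}.}

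The plan is to apply Theorem \ref{gov field} together with a Chebotarev-type equidistribution statement inside the governing field $F_\alpha$. By Theorem \ref{gov field} there is a field $F_\alpha/F(\zeta_9)$ of degree $\dim(\ker(M)) = m$, Galois over $K$, such that for any prime $\pi$ with $M_{L_\pi} = M$ the dimension $\dim((\sigma-1)\Cl(L_\pi)[(\sigma-1)^2])$ equals $m$ precisely when a (equivalently any) prime $\mathfrak P$ of $F$ above $\pi$ splits completely in $F_\alpha$. So the left-hand ratios in the two displayed limits are exactly the densities, among primes $\pi \in \L_c(X)$, of those whose Frobenius in $\Gal(F_\alpha/K)$ restricted to $\Gal(F_\alpha/F)$ is trivial, respectively nontrivial. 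It therefore suffices to show that these Frobenius classes equidistribute, so that the density of the "split in $F_\alpha$" condition among $\pi \in \L_c(X)$ is $1/[F_\alpha:F] = 3^{-m}$.

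The key steps, in order, would be: first, reinterpret membership of $\pi$ in the coset $c$ as a splitting condition in $F$ over $K$ — the condition $M_{L_\pi} = M$ is, by the discussion in Section \ref{redei} and the sentence preceding Theorem \ref{gov field}, equivalent to $\Frob_\pi$ lying in a prescribed conjugacy class (indeed element, since $\Gal(F/K)$ is abelian) of $\Gal(F/K)$; thus $\L_c(X)$ is a set of primes cut out by a splitting condition in the abelian extension $F/K$. Second, observe that $F_\alpha/K$ is Galois and that restriction gives $\Gal(F_\alpha/K) \twoheadrightarrow \Gal(F/K)$, with $\Gal(F_\alpha/F)$ an elementary abelian $3$-group of order $3^m$; the condition "$\pi \in c$ and $\mathfrak P$ splits in $F_\alpha$" picks out primes whose Frobenius in $\Gal(F_\alpha/K)$ lies in a single coset of $\Gal(F_\alpha/F)$ sitting over the prescribed element of $\Gal(F/K)$, namely over $\Frob$ corresponding to $c$. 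Third, apply the Chebotarev density theorem to $F_\alpha/K$: among all primes of $K$, the density lying in the prescribed element of $\Gal(F/K)$ is $1/[F:K]$, and the density lying in the distinguished coset giving both $\pi \in c$ and split-in-$F_\alpha$ is $3^{-m}/[F:K]$. Dividing, and restricting to primes with $N(\pi) \equiv 1 \pmod 9$ (which is automatic on $\L_c(X)$ since such primes are split of degree one, consistent with $\zeta_9 \in F_\alpha$), yields the ratio $3^{-m}$, and the complementary statement follows by subtraction. One should also note that almost all primes of $K$ counted here are degree-one primes, so counting by $N(\pi) \leq X$ agrees asymptotically with counting rational primes, and the ordinary prime-counting asymptotics make $|\L_c(X)| \sim (1/[F:K]) \cdot \pi(X)/\text{(local factor at }9)$ nonzero.

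The main obstacle is not the equidistribution itself — Chebotarev for $F_\alpha/K$ is standard — but rather verifying cleanly that the coset $c$ is well defined and nonempty, i.e.\ that the hypothesis "$\pi \in c$ if and only if $M_{L_\pi} = M$" is consistent with $c$ being a genuine coset of $(\O_K/(\alpha))^\ast$ corresponding to a single Frobenius element in $\Gal(F/K)$, and that this Frobenius element is compatible (unramified, correctly lifted) with the extension $F_\alpha/F(\zeta_9)$ produced in Theorem \ref{gov field}; in particular one must check that $F_\alpha/K$ really is Galois and that $\Gal(F_\alpha/F)$ is normal in $\Gal(F_\alpha/K)$ with the asserted order, so that "splits in $F_\alpha$" is a well-posed condition depending only on $\Frob_\pi$ in $\Gal(F_\alpha/K)$ and not on the choice of $\mathfrak P | \pi$. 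Granting those structural facts, which are exactly what Theorem \ref{gov field} asserts, the probabilistic conclusion is immediate from Chebotarev.
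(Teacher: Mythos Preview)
Your proposal is correct and follows exactly the paper's approach: the paper's proof is the single sentence ``This follows from Theorem \ref{gov field} and Chebotarev's density theorem applied to the field $F_\alpha$,'' and you have simply unpacked that into its natural details. One small point of care: the governing field $F_\alpha$ sits over $F(\zeta_9)$ rather than over $F$, so the relevant index giving $3^{-m}$ is $[F_\alpha : F(\zeta_9)]$, with the $\zeta_9$-layer absorbed by the hypothesis $N(\pi)\equiv 1\pmod 9$ on $\L_c(X)$ --- you acknowledge this, but your sentence stating $|\Gal(F_\alpha/F)| = 3^m$ should be adjusted accordingly.
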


\begin{proof} This follows from Theorem \ref{gov field} and Chebotarev's density theorem applied to the field $F(\alpha)$. 
\end{proof}

In order to obtain the analogous result where we allow more variation than one prime at a time, we follow the strategy in \cite{Smith1}, augmented by the observation in \cite{Smith2} that one can obtain a quantitative version of Chebotarev's density theorem whose dependency on the conductor $q$ of a number field is a power of the logarithm. The following result is a slight generalization of Proposition 6.5 in \cite{Smith2}.

\begin{proposition} \label{Chebo} Let $M/\bQ$ be a finite Galois extension and $G = \Gal(M/\bQ)$ be a $\ell$-group for a prime $\ell$. Suppose $M = K L$, where $L/\bQ$ is is a Galois extension of degree $d$ and $K/\bQ$ is an elementary abelian extension, and where the discriminant $\Delta(L)$ of $L/\bQ$ and the discriminant $\Delta(K)$ of $K/\bQ$ are relatively prime. Let $d(K_0)$ be the maximal discriminant of degree $p$ subfield $K_0 \subset K$. \\

Let $f : G \rightarrow [-1,1]$ be a class function of $G$ with average over $G$ equal $\kappa$. Then there is a number $c_\ell$ depending at most on $\ell$ such that
\[\frac{1}{|G|} \sum_{p \leq X} f \left(\left[\frac{M/\bQ}{p} \right] \right) \log p = \kappa X +  \]
\[ O_\ell \left( X^\gamma + X \exp \left( \frac{c_\ell d^{-4} \log X}{\sqrt{\log X} + 3d \log |\Delta(K) \Delta(L)} \right) \left(d^2 \log \left \lvert X \Delta(K) \Delta(L) \right \rvert \right)^4  \right) \]
for all $X \geq 3$, where $\gamma$ is the maximal real zero of any Artin $L$-function defined for $G$, the term being ignored if no such zero exists. The implied constant depends at most on $\ell$.
  \end{proposition}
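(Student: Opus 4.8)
The plan is to deduce the statement from the case $f = \one_C$ of a single conjugacy class, which is (essentially) Proposition~6.5 of \cite{Smith2}, by pure bookkeeping, and then to recall why that case holds. For the reduction, write $\sum_{p \leq X} f\!\left(\Frob_p\right)\log p = \sum_C f(C)\sum_{\Frob_p \in C,\ p \leq X}\log p$, where $C$ runs over conjugacy classes of $G$ and ramified primes and prime powers are discarded (which contributes negligibly). Inserting the per-class estimate $\sum_{\Frob_p \in C,\ p \leq X}\log p = \tfrac{|C|}{|G|}X + E_C(X)$ with $|E_C(X)| \leq \tfrac{|C|}{|G|}\,\mathcal{E}(X)$ for a quantity $\mathcal{E}(X)$ independent of $C$, the main terms assemble to $\big(\tfrac{1}{|G|}\sum_C |C| f(C)\big) X = \kappa X$ and the total error to at most $\big(\tfrac{1}{|G|}\sum_C |C|\,|f(C)|\big)\mathcal{E}(X) \leq \mathcal{E}(X)$, since $|f| \leq 1$. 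Thus it suffices to establish the per-class estimate with $\mathcal{E}(X)$ equal to the error claimed in the proposition, and for that I would argue as follows.

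From $M = KL$ with $\gcd(\Delta(K),\Delta(L)) = 1$: any subfield of $K \cap L$ is unramified over $\bQ$, hence equals $\bQ$ by Minkowski, so $K$ and $L$ are linearly disjoint and $G = \Gal(M/\bQ) = A \times H$ with $A = \Gal(K/\bQ)$ elementary abelian and $|H| = d$. Being an $\ell$-group, $G$ is nilpotent, hence monomial: every irreducible character is $\chi = \psi \otimes \rho$ with $\psi \in \wh{A}$, $\rho \in \mathrm{Irr}(H)$, and $\rho = \Ind_{H'}^{H}\mu$ for some $H' \leq H$ and linear $\mu$; induction in stages gives $\chi = \Ind_{A \times H'}^{G}(\psi|_{A} \otimes \mu)$, so by Artin formalism and class field theory $L(s, \chi, M/\bQ) = L(s, \theta_\chi, E_\chi)$ is the Hecke $L$-function of a ray-class character $\theta_\chi$ over $E_\chi := M^{A \times H'}$, a \emph{subfield of} $L = M^{A}$ with $[E_\chi : \bQ] = \dim\rho \leq d$. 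In particular these $L$-functions are entire for $\chi \neq 1$, the trivial character contributing, through the pole of $\zeta(s)$, the main term $\tfrac{|C|}{|G|}X$.

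The key uniform input is a conductor bound. Since $E_\chi \subseteq L$, the conductor--discriminant formula gives $|\Delta(E_\chi)| \leq |\Delta(L)|$; and $\mathfrak{f}(\theta_\chi)$ factors into a part supported over primes dividing $\Delta(K)$ --- governed by $\psi$, which, $K$ being elementary abelian, cuts out a degree-$\ell$ subfield $K_0 \subset K$, so this part has norm at most $d(K_0)^{d/(\ell-1)}$ --- and a part supported over primes dividing $\Delta(L)$, of norm dividing a bounded power of $|\Delta(L)|$. Since $d(K_0) \leq |\Delta(K)|$, this yields $\log \mathfrak{q}(\theta_\chi) \ll_\ell d\log\!\big(|\Delta(K)\Delta(L)|\big)$ uniformly in $\chi$; it is this --- the analytic conductor being of size $\exp\big(O_\ell(d\log|\Delta(K)\Delta(L)|)\big)$, rather than a power of the full discriminant of $M$ --- that forces the conductor dependence in the final error term to be only a power of the logarithm.

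With this in hand I would, for each nontrivial $\chi$, apply the truncated explicit formula for $L(s, \theta_\chi)$ and estimate the sum over nontrivial zeros by a standard zero-free region for a Hecke $L$-function over a field of degree $\leq d$ and conductor $\mathfrak{q}(\theta_\chi)$, together with Landau's zero count; the Deuring--Heilbronn phenomenon guarantees that at most one exceptional real zero occurs among all the $\theta_\chi$, necessarily for a real $\theta_\chi$, and isolating it produces the term $X^\gamma$ (absent if no such zero exists). Assembling the per-class contributions gives $\mathcal{E}(X)$ of the stated form, the exponent $d^{-4}\log X/(\sqrt{\log X} + 3d\log|\Delta(K)\Delta(L)|)$ coming from the degree $\leq d$ of the base fields together with the conductor bound above, and the factor $(d^2\log|X\Delta(K)\Delta(L)|)^4$ absorbing the zero-counting losses. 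The main obstacle is the \emph{uniformity}: securing the zero-free-region constant and the zero count with the correct power of $d$ simultaneously across the whole family $\{L(s,\theta_\chi)\}$, and arranging the Deuring--Heilbronn repulsion so that a single $\gamma$ accounts for all exceptional behaviour --- but this is precisely what is carried out in \cite{Smith2}, so the only genuinely new content is the reduction of the first paragraph, which leaves the analytic core untouched.
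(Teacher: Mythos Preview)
Your argument is correct and lands on the same analytic core as the paper --- monomiality of $\ell$-groups, the observation that each irreducible character involves only a single degree-$\ell$ subfield $K_0 \subset K$, a conductor bound in terms of $d(K_0)$ and $\Delta(L)$, and then a PNT-with-error for each $L$-function --- but the packaging differs. You expand $f$ over conjugacy classes, invoke the $f=\one_C$ case (which you identify with Proposition~6.5 of \cite{Smith2}), and then for that case pass to Hecke $L$-functions over the subfields $E_\chi \subset L$ cut out by $A\times H'$. The paper instead expands $f$ directly in irreducible characters, $f=\sum_\rho a_\rho \chi_\rho$, notes that each $\chi_\rho$ factors through $\Gal(K_0L/\bQ)$, applies Theorems~5.10 and~5.13 of \cite{IK} to each Artin $L$-function, and sums using the Cauchy--Schwarz bound $\sum_\rho|a_\rho|\leq|G|$. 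The two viewpoints --- Hecke over $E_\chi\subset L$ versus Artin over $K_0L$ --- are the two standard ways to cash in monomiality, and the conductor arithmetic is equivalent; your reduction saves the Cauchy--Schwarz step at the cost of a second layer of citation. The only place to tighten is the normalization of the per-class error (whether $|E_C(X)|$ carries $|C|$ or $|C|/|G|$, matching the $1/|G|$ in the statement), but this is pure bookkeeping.
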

  
  \begin{proof} Let $\rho$ be a non-trivial irreducible representation of $G$. As $G$ is a $p$-group, $G$ is nilpotent and is hence a monomial group. Hence the Artin conjecture is true for $\rho$, which means that the Artin $L$-function $L(\rho,s)$ is entire. The representations $\rho \otimes \rho$ and $\rho \otimes \ol{\rho}$ also satisfy Artin's conjecture, so $L(\rho \otimes \ol{\rho}, s)$ is entire except for a simple pole at $s = 1$ and $L(\rho \otimes \rho, s)$ is entire unless $\rho$ is isomorphic to $\ol{\rho}$. \\
  
  We now apply Theorem 5.10 in \cite{IK} to $L(\rho, s)$. Note that $\rho$ is defined on $\Gal(K_0 L/\bQ)$ for some degree-$\ell$ extension $K_0/\bQ$ inside $K$, so the degree of $L(\rho,s)$ is bounded by $2d$ and the conductor of $L(\rho,s)$ is bounded by the discriminant of $K_0 L/\bQ$. This in turn is bounded by $d(K_0)^d \cdot |\Delta(L)|^\ell$. Theorem 5.13 in \cite{IK} then gives
  \[\sum_{p \leq X} \chi_\rho \left(\left[\frac{M/\bQ}{p} \right] \right) \log p \]
  \[ = O_\ell \left(X^\gamma + X \exp \left(\frac{-c_\ell d^{-4} \log X}{\sqrt{\log X} + 3d \log |d(K_0) \Delta(L)|}\right) (d^2 \log |X d(K_0) \Delta(L)|)^4 \right).\]
  Now we may write $F$ in the form $\sum_{\rho} a_\rho \chi_\rho$, the sum running over irreducible representations of $G$. Then
  \begin{align*} \sum_\rho |a_\rho| & = \sum_\rho \left \lvert |G|^{-1} \sum_{g \in G} f(g) \cdot \ol{\chi_\rho}(g) \right \rvert \\
  & \leq \sum_\rho \left(\frac{1}{|G|} \sum_{g \in G} f(g) \cdot \ol{f}(g) \right)^{1/2} \left(\frac{1}{|G|} \sum_{g \in G} \chi_\rho(g) \ol{\chi_\rho}(g) \right)^{1/2} (\text{by Cauchy-Schwarz}) \\
  & \leq \sum_\rho 1 \\
  & \leq |G|. 
  \end{align*}
  The Proposition follows from linearity. 
  \end{proof}
  
In order to apply Proposition \ref{Chebo}, we will need to bound the conductor of $F(\alpha)$. This follows from the fact that $F(\alpha)$ is unramified over $K(\sqrt[3]{\alpha})$ and degree $3^{r + m}$ over $K(\sqrt[3]{\alpha})$. In particular, the conductor of $F(\alpha)$ is at most
\[N(\alpha)^{3^{r+m}}.\]
The implied constant in the error term in Proposition \ref{Chebo} depends at most on the prime $\ell$, which we may view as an absolute constant (indeed, for our application $\ell = 3$). We then see that 
\[ \exp \left(\frac{-c_\ell d^{-4} \log X}{\sqrt{\log X} + 3d \log |d(K_0) \Delta(L)|} \right) (d^2 \log |X d(K_0) \Delta(L)|)^4  \ll_A  (\log X)^{-A} \]
for any $A > 0$. Thus it remains to treat the possible Siegel zero in the error term. Here we merely observe that the number fields $F(\alpha)$ under consideration are $3$-extensions of $K = \bQ(\zeta_3)$, hence the only degree-1 factors which appear in the decomposition for the Artin $L$-function of $F(\alpha)$ correspond to the trivial representation or the field character of $K$, namely $\chi_{-3}$. Therefore the Artin $L$-function of $F(\alpha)$ cannot have Siegel zeroes and we can ignore this contribution. \\

Applying Proposition \ref{Chebo} to Proposition \ref{1 prime} then gives
\begin{equation} \label{PS1} \sum_{\substack{\pi \in \L_c(X^\dagger)}} 1 = \frac{\Li(X^\dagger)}{3^{r+1}} + O_a \left((r+m)^4 X^\dagger \exp \left(-(\log X^\dagger)^{a} \right) \right)
\end{equation}
and
\begin{equation} \label{PS2} \sum_{\substack{\pi \in \L_c(X^\dagger) \\ \dim(\sigma - 1) \Cl(L_\pi)[(\sigma - 1)^2] = m}} 1 = \frac{\Li(X^\dagger)}{3^{m + r+1}} + O_a \left((r+m)^4  X^\dagger \exp \left( - \log( X^\dagger)^{a} \right) \right)
\end{equation}
for some $0 < a < 1$. Both follows from Proposition \ref{Chebo}. Note that the error terms in (\ref{PS1}) and (\ref{PS2}) are independent of $\alpha$, whence we may sum both uniformly across $\alpha$ to obtain the following:

\begin{theorem} The limit
\[\lim_{X \rightarrow \infty} \frac{\left \lvert \{\alpha \in S_m(X) : \dim (\sigma - 1) \Cl(K(\sqrt[3]{\alpha})[(\sigma - 1)^2] = m  \} \right \rvert}{|S_m(X)|} = 3^{-m}\]
holds. 
\end{theorem}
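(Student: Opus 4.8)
The plan is to combine the two asymptotic estimates (\ref{PS1}) and (\ref{PS2}) with the combinatorial decomposition of $S_m(X)$ introduced in the previous subsection. Recall that $S_m(X) = \bigcup_{r \geq 0} \bigcup_M S_m^{(r)}(M;X)$, where the inner union is over symmetric $r\times r$ $\bF_3$-matrices $M$ with $\rk M = m - r$ (so $\dim\ker M = m$), and that by the discussion culminating in (\ref{X dag bd}) the constraint $M_L = M$ can be inserted via the indicator $\prod_{i\le j} u_{ij}$ at the cost of error terms of size $O\!\left(X\exp(-c\exp(\sqrt{\log\log X}))\right)$, which is $\ll_A X(\log X)^{-A}$. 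First I would fix $\alpha = \pi_1\cdots\pi_r$ with $r$ in the range (\ref{r bd}), fix a symmetric matrix $A = (a_{ij})$ with off-diagonal entries matching $\log(\pi_i/\pi_j)_3$, and let $c$ be the corresponding coset in $(\O_K/(\alpha))^\ast$ as in Proposition \ref{1 prime}. For each such $\alpha$ and $A$, equations (\ref{PS1}) and (\ref{PS2}) say that the number of primes $\pi \in \L_c(X^\dagger)$ with $L_\pi \in S_m(X^\dagger)$ is $\Li(X^\dagger)/3^{r+1}$ up to an acceptable error, while the number of those also satisfying $\dim(\sigma-1)\Cl(L_\pi)[(\sigma-1)^2] = m$ is $\Li(X^\dagger)/3^{m+r+1}$; the ratio is thus $3^{-m}$ termwise.

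Next I would sum over $\alpha$. The crucial point, already flagged in the paragraph preceding the theorem, is that the error terms in (\ref{PS1}) and (\ref{PS2}) are uniform in $\alpha$ — they depend only on $r$, $m$, and $X^\dagger$ through the polynomial factor $(r+m)^4$ and the exponential savings $\exp(-(\log X^\dagger)^a)$. Summing the main terms over all $\alpha = \pi_1\cdots\pi_r$ with $N(\alpha\pi) \le X$ reconstructs $|S_m(X)|$ in the numerator's companion sum and $|\{\alpha \in S_m(X) : \dim(\sigma-1)\Cl(\cdot)[(\sigma-1)^2] = m\}|$ in the numerator itself, and the ratio of main terms is exactly $3^{-m}$. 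I would then bound the accumulated error: the number of admissible $(\alpha, A)$ pairs with $r$ prime factors is at most (number of $\alpha$'s) times (number of symmetric $r\times r$ $\bF_3$-matrices) $\ll 6^{-r} X (\log\log X)^{r-1}(\log X)^{-1} \cdot 3^{r(r+1)/2}$, and after multiplying by the per-term error $(r+m)^4 X^\dagger \exp(-(\log X^\dagger)^a)$ and summing over the short range (\ref{r bd}) of $r$, the total is still $o(|S(X)|)$ because $X^\dagger$ is subpolynomial and the exponential savings $\exp(-(\log X^\dagger)^a)$ beats every power of $\log\log X$ and every power of $3^{r^2}$ in this range — here one uses that $r \ll \log\log X$ so $3^{r^2/2} = \exp(O((\log\log X)^2))$ is dominated by $\exp((\log X^\dagger)^a) = \exp(\exp(a\sqrt{\log\log X}))$.

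Finally I would pass from the $X^\dagger$-truncated count back to the full count over $N(\alpha\pi) \le X$: by Lemma \ref{TT comp} and the reduction to $T_r^\ast(D;X)$, the $\alpha$ whose prime factors are not all larger than $X^\dagger$ contribute negligibly, so we may assume $\pi_1,\dots,\pi_r$ all exceed $X^\dagger$ and the remaining prime $\pi$ (with $N(\pi)$ up to roughly $X/N(\alpha)$) is handled by applying the $\L_c$ count with $X^\dagger$ replaced by the appropriate upper limit — the same Chebotarev input, Proposition \ref{Chebo} via Proposition \ref{1 prime}, applies verbatim since the conductor bound $N(\alpha)^{3^{r+m}}$ and the absence of Siegel zeroes were established uniformly. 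Partial summation over the last variable and then over $\alpha$ gives the claimed limit. The main obstacle I anticipate is purely bookkeeping rather than conceptual: ensuring that every error term is genuinely uniform in $\alpha$ and that the combined sum over the $\exp((\log\log X)^{3/4})$-many values of $r$, each contributing a factor as large as $3^{r(r+1)/2}$ from the matrix count, is still swallowed by the Chebotarev savings; this is where the choice $X^\dagger = \exp(\exp(\sqrt{\log\log X}))$ in (\ref{X dag}) is being used in an essential way, and one must check the inequality $\exp(O((\log\log X)^2)) \cdot (\log\log X)^{O(\log\log X)} = o\!\left(\exp(\exp(a\sqrt{\log\log X}))\right)$ carefully.
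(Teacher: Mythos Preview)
Your proposal follows exactly the paper's approach; in fact the paper's entire argument for this theorem is the single sentence preceding it, asserting that the uniformity in $\alpha$ of the error terms in (\ref{PS1}) and (\ref{PS2}) permits summation over $\alpha$, and you have supplied far more detail than the paper does.

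One bookkeeping slip to repair: in your second paragraph you bound the number of cofactors $\alpha = \pi_1\cdots\pi_r$ by $\asymp 6^{-r}X(\log\log X)^{r-1}/\log X$ and then multiply by a per-term error of size $(r+m)^4 X^\dagger\exp(-(\log X^\dagger)^a)$. The product then carries a factor $X\cdot X^\dagger$, and since $X^\dagger\exp(-(\log X^\dagger)^a)\to\infty$ for any $a<1$, this bound does not beat $|S(X)|\asymp X/\log X$. The fix is precisely what you yourself indicate in the third paragraph: apply (\ref{PS1})--(\ref{PS2}) with upper limit $X/N(\alpha)$ rather than $X^\dagger$. Because we have already reduced to $T_r^\ast$, every prime of $\alpha\pi$ exceeds $X^\dagger$, so $X/N(\alpha)\ge N(\pi)>X^\dagger$ and hence $(\log(X/N(\alpha)))^a\ge \exp(a\sqrt{\log\log X})$; the per-term error becomes $(X/N(\alpha))\exp(-\exp(a\sqrt{\log\log X}))$, and now $\sum_\alpha X/N(\alpha)$ is (up to a logarithm) comparable to the main term, so the factor $\exp(-\exp(a\sqrt{\log\log X}))$ absorbs the $3^{r(r+1)/2}$ and $(r+m)^4$ as you argue. (Incidentally, for fixed $\alpha$ only $3^r$ of the $3^{r(r+1)/2}$ symmetric matrices actually occur, since the off-diagonal entries are determined by the $\pi_i$; your cruder bound still suffices.)
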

We will find it useful to name the set $\{\alpha \in S_m(X) : \dim (\sigma - 1) \Cl(K(\sqrt[3]{\alpha})[(\sigma - 1)^2] = m  \} := S'_m(X)$.

\section{Proof of Theorem \ref{MainTheorem} and Heuristics Behind Conjecture \ref{FalseConjecture}}\label{Proof}
Now, we put together all of the previous discussion into a proof of Theorem \ref{MainTheorem}.
\begin{proof}[Proof of Theorem \ref{MainTheorem}]
The upshot of the disussion in Section \ref{Analysis} is that 
$$\lim_{X\rightarrow\infty} \frac{|S_m(X)|}{|S(X)|} = \beta_m,$$ and
$$\lim_{X\rightarrow\infty} \frac{|S'_m(X)|}{|S(X)|} = \frac{\beta_m}{3^m}.$$
If $\alpha \in S_0(X)$, then the $\zeta_3$-Pell equation for $K(\sqrt[3]{\alpha})$ has a solution by Theorem \ref{PositiveCriterion}, and if $K(\sqrt[3]{\alpha})$ has a solution to the $\zeta_3$-Pell equation, then $\alpha \in S'_m(X)$ for some $m$ by Theorem \ref{NegativeCondition}.  One clearly has $\beta_0 = \beta$, which gives the lower bound in Theorem \ref{MainTheorem}.  To get the upper bound, we need to compute the sum $\sum_{m = 0}^{\infty} \frac{\beta_m}{3^m}.$  Following \cite{FK2}, we expand the sum as follows:
\begin{align*}
\sum_{m = 0}^{\infty} \frac{\beta_m}{3^m} & = \beta\sum_{m=0}^\infty\frac{3^{-m}}{(3-1)(3^2-1)\cdots(3^m-1)} \\
& = \beta\sum_{m=0}^\infty\frac{3^{-m}3^{-\frac{m(m+1)}{2}}}{(1-1/3)(1-(1/3)^2) \cdots (1-(1/3)^m)} \\
& = \beta\prod_{m=0}^{\infty} (1+ (1/3)^{m+1}) \\
& = \prod_{m=0}^\infty \frac{(1+ (1/3)^{m+1})}{(1+ (1/3)^{m})} \\
& = \frac{3}{4}.
\end{align*}
The third equality is just Lemma 4 of \cite{FK2}.  This gives the upper bound in Theorem \ref{MainTheorem}.
\end{proof}

Additionally, here are the considerations that lead one to make Conjecture \ref{FalseConjecture}.  The analogous prediction to Stevenhagens in \cite{Ste} is that, for $\alpha \in S_m(X)$, there is a $\frac{2}{3^{m+1} - 1}$ chance that there is a solution to the $\zeta_3$-Pell equation for $K(\sqrt[3]{\alpha})$.  Thus, one computes
\begin{align*}
\sum_{m=0}^{\infty} \frac{2\beta}{(3^{m+1} - 1)\prod_{j = 0}^{m} (3^j - 1)} & = 2 \sum_{m=1}^{\infty} \frac{\beta}{\prod_{j = 0}^{m} (3^j - 1)} \\
& = 2 \left(\left(\sum_{m=0}^\infty \frac{\beta}{\prod_{j = 0}^{m} (3^j - 1)}\right) - \beta\right) \\
& = 2 \left(\left(\sum_{m=0}^\infty \beta_m\right) - \beta\right) \\
& = 2(1-\beta).
\end{align*}

\section{Differences between the $\zeta_3$-Pell equation and the negative Pell equation}
\label{diffs} 

The goal of this section is two-fold.  Firstly, we wish to convince the reader that the obvious generalization of Stevenhagen's conjecture is likely incorrect.  Secondly, we wish to explain that things behave differently when we allow $\lambda$ to ramify. 
\subsection{The validity of conjecture \ref{FalseConjecture}}

To the first end, we will discuss some of Stevenhagen's considerations when making his conjecture, and show that these do not have exact analogues in the $\zeta_3$-Pell case.  Consider fields $L$ of the form $\mathbb{Q}(\sqrt{pq})$ with $p, q \equiv 1 \pmod{4}$.  Stevenhagen predicts that for half of these we have $\Cl(L)_2 = \mathbb{Z}/2\mathbb{Z}$ and there is a solution to the negative Pell equation. Half of the remaining half would have $\Cl(L)_2 = \mathbb{Z}/2\mathbb{Z}$ but there is no solution to the negative Pell equation. Half of those would have $\Cl(L)_2 = \mathbb{Z}/4\mathbb{Z}$ and there is a solution to the negative Pell equation, and so on. \\

Moving over to the case where $L = K(\sqrt[3]{\pi_1\pi_2})$, the story starts off the same: two-thirds of the time, $\Cl(L)_{\sigma-1} = R_{\sigma}/(\sigma - 1)$ and there is a solution to the $\zeta_3$-Pell equation, and two-thirds of the remaining time $\Cl(L)_{\sigma-1} = R_{\sigma}/(\sigma - 1)$ but there is no solution to the $\zeta_3$-Pell equation.  However, we have the following fact that causes everything to go off the rails:
\begin{proposition}
Assume that $\left(\frac{\pi_1}{\pi_2}\right)_3 = 1$ and there is a solution to the $\zeta_3$-Pell equation in $L$.  Then $\Cl(L)[(\sigma - 1)^3] = R_\sigma/(\sigma-1)^3$.
\end{proposition}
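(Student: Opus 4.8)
The plan is to reformulate the assertion as an inequality for a single invariant of $\Cl(L)$ and then climb the $(\sigma-1)$-filtration two steps. Throughout write $n = \omega(\alpha) = 2$, $\alpha = \pi_1\pi_2$, and let $\mathfrak{P}_i$ be the prime of $L$ above $\pi_i$. Since $L/K$ is unramified at $\lambda$ we have $\pi_i \equiv 1 \pmod{\lambda^3}$, so Corollary~\ref{GenusField} gives $\dim_{\mathbb{F}_3}\Cl(L)/(\sigma-1)\Cl(L) = n-1 = 1$, and hence by the ambiguous class number formula $\dim_{\mathbb{F}_3}\Cl(L)[\sigma-1] = 1$ as well. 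As $\Cl(L)_{\sigma-1}$ is a module over the principal ideal domain $R_\sigma = \bZ[\sigma]/(\sigma^2+\sigma+1)$ with one-dimensional $(\sigma-1)$-torsion, it is cyclic, say $\Cl(L)_{\sigma-1} \cong R_\sigma/(\sigma-1)^c$ with $c \geq 1$; and since $(\sigma-1) = (\lambda)$ acts invertibly on the prime-to-$\lambda$ part of $\Cl(L)$ one gets $\Cl(L)[(\sigma-1)^3] \cong R_\sigma/(\sigma-1)^{\min(c,3)}$. So the Proposition is precisely the claim $c \geq 3$.

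Next I would show $c \geq 2$. Here the R\'edei matrix $M_L$ of Subsection~\ref{redei} is a single element of $\mathbb{F}_3$, equal up to an invertible scalar to $\log\bigl(\bigl(\tfrac{\pi_2}{\pi_1}\bigr)_3\bigr)$; by cubic reciprocity between the primary primes $\pi_1,\pi_2$ and the hypothesis $\bigl(\tfrac{\pi_1}{\pi_2}\bigr)_3 = 1$ we get $M_L = 0$. Because the $\zeta_3$-Pell equation is solvable, Theorem~\ref{Zeta3PellCondition1} says the $\mathfrak{P}_i$ generate $\Cl(L)[\sigma-1]$, so $M_L$ really is the R\'edei pairing on $\Cl(L)$ (not merely its restriction to a proper subgroup), with kernel $(\sigma-1)\Cl(L)[(\sigma-1)^2]$. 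Thus $M_L = 0$ forces $(\sigma-1)^2\Cl(L) \neq 0$, i.e. $c \geq 2$. Equivalently, in Theorem~\ref{gov field} with varying prime $\pi_2$ one has $\dim\ker M_L = 1$, so the governing field is $F(\zeta_9)$ itself and the splitting condition there is vacuous.

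The crux is to rule out $c = 2$, and for this the plan is to run the construction behind Theorem~\ref{gov field} one level higher. Let $H = L(\sqrt[3]{\pi_1}) = K(\sqrt[3]{\pi_1},\sqrt[3]{\pi_2})$ be the genus field of $L$ (Corollary~\ref{GenusField}), and let $L_{(3)}$ be the maximal unramified abelian extension of $L$ whose Galois group is killed by $(\sigma-1)^3$; then $\Gal(L_{(3)}/L) \cong R_\sigma/(\sigma-1)^{\min(c,3)}$, so (given $c \geq 2$) proving $c \geq 3$ amounts to showing $\dim_{\mathbb{F}_3}\Gal(L_{(3)}/L) = 3$. One has $H \subseteq L_{(3)}$, the extension $L_{(3)}/H$ is unramified, $L_{(3)}/K$ has $3$-power degree and is ramified only above $\pi_1,\pi_2$, and $L_{(3)}$ is abelian over $H$. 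Class field theory over $H$ then expresses the size of $\Gal(L_{(3)}/L)$ in terms of whether the appropriate graded piece of $\mathcal{O}_H^\times$ for the $\Gal(H/K)$-action reduces to cubes modulo the primes above $\pi_2$ --- equivalently, exactly as in the proof of Theorem~\ref{gov field}, in terms of the splitting of those primes in an explicit Kummer extension of $H(\zeta_9)$, a ``level-three governing field''. It then remains to show that solvability of the $\zeta_3$-Pell equation forces that splitting. I would use that solvability means $H^2(\langle\sigma\rangle,\mathcal{O}_L^\times) = 0$, i.e. that there is $u \in \mathcal{O}_L^\times$ with $N_{L/K}(u) = \zeta_3$. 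Viewing $u$ in $\mathcal{O}_H^\times$ and using that $\zeta_3$ is not a cube in $K_\lambda$ (equivalently $\zeta_9 \notin K_\lambda$), the identity $N_{L/K}(u) = \zeta_3$ pins down the reduction of $u$ at the primes above $\lambda$ and above $\pi_2$ --- at $\mathfrak{P}_2$ one has $N_{L/K}(u) \equiv u^3 \pmod{\mathfrak{P}_2}$, since the inertia group there acts trivially on the residue field; combined with $\bigl(\tfrac{\pi_1}{\pi_2}\bigr)_3 = 1$, which makes $\mathfrak{P}_1$ split completely in $H/L$, this should be exactly enough to verify the governing condition, giving $c \geq 3$.

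I expect this last step to be the real obstacle. The module-theoretic bookkeeping that upgrades the governing-field construction of Theorem~\ref{gov field} by one level is routine but lengthy; the genuinely delicate part is the local analysis at $\lambda \mid 3$, where one must locate the norm-$\zeta_3$ unit $u$ modulo cubes precisely enough, together with the tamely ramified $\pi_1,\pi_2$, both to certify that the Kummer extension one produces is actually unramified over $L$ and to see that its Galois group is the full $R_\sigma/(\sigma-1)^3$ rather than a proper quotient.
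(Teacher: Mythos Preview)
Your reduction to $c\geq 3$ and your proof that $c\geq 2$ are correct and coincide with the paper. The divergence, and the gap, is in the last step: you try to build the $(\sigma-1)^3$-level field $L_{(3)}$ directly via class field theory over $H=K(\sqrt[3]{\pi_1},\sqrt[3]{\pi_2})$ and then to certify the relevant splitting by manipulating the actual norm-$\zeta_3$ unit $u$. As you yourself flag, this is left at the level of ``should be exactly enough'', and the local analysis at $\lambda$ together with the identification of a level-three governing field is not carried out. So as written the proposal does not establish $c\geq 3$.

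The paper avoids all of this with a shorter argument that never constructs $L_{(3)}$ and never touches $u$ directly. Once $c\geq 2$ is known, one already has the field $L_2$ with $\Gal(L_2/L)\cong R_\sigma/(\sigma-1)^2$; it suffices to show that the Artin image of $[\mathfrak P_1]$ in $\Gal(L_2/L)$ vanishes, since $[\mathfrak P_1]$ is a \emph{nonzero} element of $\Cl(L)[\sigma-1]$ (this is the only use of Pell solvability, via Theorem~\ref{Zeta3PellCondition1}), and a nonzero $(\sigma-1)$-torsion class lying in $(\sigma-1)^2\Cl(L)$ forces $c\geq 3$. That Frobenius is computed over $F=K(\sqrt[3]{\pi_1})$ exactly as in the $17$-example of Section~\ref{governing}: writing $V=(\mathcal O_F/\pi_2)^\times\otimes_{\bZ}\bF_3\cong((\mathcal O_K/\pi_2)^\times\otimes\bF_3)^3$ with $\tau$ permuting the three factors, the Frobenius of $\pi_1$ is the class of $(a,\zeta_3 a,\zeta_3^2 a)$ for any $a$ with $a^3\equiv\pi_1\pmod{\pi_2}$. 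Since $N(\pi_2)\equiv 1\pmod 9$, the element $\zeta_3$ is already a cube modulo $\pi_2$, so this triple lies on the diagonal, i.e.\ in $V[\tau-1]$, and hence dies in $\Gal(L_2/F)=V/V[\tau-1]$. The missing idea in your approach is thus twofold: cash in Pell solvability as ``$\mathfrak P_1$ generates $\Cl(L)[\sigma-1]$'' rather than as an explicit unit, and test $[\mathfrak P_1]$ against the already-built $L_2$ instead of trying to manufacture $L_{(3)}$.
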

This proposition implies that the case ``$\Cl(L)_{\sigma - 1} = R_{\sigma}/(\sigma-1)^2$, $\zeta_3$-Pell has a solution'' is skipped.  Numerical calculations seem to suggest that this is the only case that is skipped.
\begin{proof}
Since there is a solution to the $\zeta_3$-Pell equation, one has that $\Cl(L)[\sigma-1]$ is generated by $\mathfrak{P}_1$, the prime lying over $\pi_1$.  Letting $L_1 = K(\sqrt[3]{\pi_1}, \sqrt[3]{\pi_2})$ so that $\Gal(L_1/L) = Cl(L)/\sigma-1$, one sees that the first R\`edei map is zero.  Thus, there is a field $L_2/L$ as in the middle of Section \ref{governing} replacing $17$ with $\pi_1$ and $\pi$ with $\pi_2$.  Letting $F = K(\sqrt[3]{\pi_1})$, $\tau$ generates $\Gal(F/K)$, and $V = (\mathcal{O}_F/\pi_2)^\times \otimes_\mathbb{Z} \mathbb{F}_3$, so we have that $\Gal(L_2/F) = V_\pi/V_\pi[\tau-1]$.  Now we need to compute the image of the Frobenius of the prime lying over $\pi_1$ in $\Gal(L_2/L)$.  But this is zero if and only if that's the case for $L_2/F$. We know what that is: we can write $V = ((\mathcal{O}_K/\pi_2)^\times \otimes_\mathbb{Z} \mathbb{F}_3)^3$ with $\tau$ permuting the factors.  Choosing an element $a$ such that $a^3 \equiv \pi_1 \pmod{\pi_2}$, we get that the Frobenius is just the image of $(a, \zeta_3 a, \zeta_3^2 a)$ in $\Gal(L_2/F)$.  By our long-running assumption on primes in $K$, we have that $\zeta_3$ is a cube, so this lies in the diagonal of $V$.  But $V[\tau-1]$ is exactly equal to the diagonal, so the image of the Frobenius of the prime lying over $\pi_1$ in $\Gal(L_2/L)$ is 0, which gives the proposition. \end{proof}

\subsection{Ramification at $\lambda$}

Let $$S_{\lambda}(X) = \{\alpha | N_{K/\mathbb{Q}}(D_{K(\sqrt[3]{\alpha})/K}) < X,  \pi \equiv 1 \pmod{\lambda^3} \forall \pi|\alpha, \pi\neq\lambda \}$$
and 
$$ S^{\zeta_3}_{\lambda}(X) = \{ \alpha \in S_{\lambda}(X) : \exists u \in K(\sqrt[3]{\alpha}) \text{ with } N_{K(\sqrt[3]{\alpha})/K}(u) = \zeta_3\}.$$ 
First off, one can get trivial bounds by observing that $\displaystyle{\lim_{X\rightarrow\infty}} \frac{|S(X)|}{|S_{\lambda}(X)|} = \frac{27}{31}$, so one can instantly get $\frac{27\beta}{31}$ as a lower bound and $\frac{97}{124}$ as an upper bound.  However, it is possible to obtain improvements on this.\\

One can split $S_{\lambda}(X)$ into three sets: $S(X)$, $S'(X)$, and $S''(X)$, where 
\[S^\prime(X) = \{\alpha\in S_{\lambda}(X)|v_{\lambda}(D_{K(\sqrt[3]{\alpha})/K}) = 3\}\] 
and 
\[S^{\prime \prime}(X) = \{\alpha\in S_{\lambda}(X)|v_{\lambda}(D_{K(\sqrt[3]{\alpha})/K}) = 4\}.\]  As mentioned, $27/31$ of the elements in $S_{\lambda}(X)$ lie in $S(X)$ and that $2/31$ of all elements lie in $S^\prime(X)$ and $S^{\prime \prime}(X)$. \\

Now, everything that happened for $S(X)$ works just as well for $S''(X)$, and one can easily see that the arguments before also imply that $$\beta \leq \liminf_{X \rightarrow \infty} \frac{|S''^{\zeta_3}(X)|}{|S''(X)|}$$ and $$\limsup_{X \rightarrow\infty} \frac{|S''^{\zeta_3}(X)|}{|S''(X)|} \leq \frac{3}{4}.$$

The same cannot be said of $S'(X)$, and there are separate issues for the upper and lower bounds.  For the lower bound, choose $\alpha \in S'(X)$.  Then one can write $\alpha = \pi_1^{a_1}\cdots\pi_n^{a_n}\zeta_3^a$, and write $\mathfrak{P}_i$ for the prime in $K(\sqrt[3]{\alpha})$ lying over $\pi_i$, and $\mathfrak{P}_{\lambda}$ for the prime lying over $\lambda$.  Then one sees that the relation in the class group is now $[\mathfrak{P}_1^{a_1} \cdots \mathfrak{P}_n^{a_n}] = [(1)]$.  The trick that we used to compute the Frobenius of $\mathfrak{P}_{\lambda}$ in $\Cl(K(\sqrt[3]{\alpha}))/(\sigma-1)$ no longer works.\\

Now write $V = \mathbb{F}_3\langle e_1, \ldots, e_{n-1}, e_{\lambda}\rangle$.  There is a map from $V \rightarrow \Cl(K(\sqrt[3]{\alpha}))[\sigma-1]$ given by sending $e_i \rightarrow \mathfrak{P}_i$ and $e_{\lambda} \rightarrow \mathfrak{P}_{\lambda}$.  Additionally, one can identify $V$ with the dual group $\Gal(K(\sqrt[3]{\pi_1}, \ldots, \sqrt[3]{\pi_{n-1}}, \sqrt[3]{\zeta_3}))^{\vee}$ by sending $e_i$ to the function $\sigma \rightarrow \frac{\sigma(\sqrt[3]{\pi_i})}{\sqrt[3]{\pi_i}}$ and $e_{\lambda}$ to $\sigma \rightarrow \frac{\sigma(\sqrt[3]{\zeta_3})}{\sqrt[3]{\zeta_3}}$.  Then one has that $\Gal(K(\sqrt[3]{\pi_1}, \ldots, \sqrt[3]{\pi_{n-1}}, \sqrt[3]{\zeta_3})) = \Cl(K(\sqrt[3]{\alpha}))/(\sigma-1)$, so we have identified $V$ with $\Cl(K(\sqrt[3]{\alpha}))^{\vee}[\sigma-1]$.  This lets us compute most of the matrix $M$ representing the pairing between the subgroup of $\Cl(L)[\sigma-1]$ generated by the primes $\mathfrak{P}_i$ and $\mathfrak{P}_{\lambda}$ with $\Cl(L)^\vee[\sigma-1]$; we can in particular compute $(e_i, e_j)$, $(e_i, e_{\lambda})$, and $(e_\lambda, e_j)$ as before, but we can no longer compute the diagonal entry $(e_{\lambda}, e_{\lambda})$ as we have no way of replacing $e_{\lambda}$ on the left with the $e_{i}$s.\\

To fix that issue, we can also identify $\Gal(K(\sqrt[3]{\pi_1}, \ldots, \sqrt[3]{\pi_{n-1}}, \sqrt[3]{\pi_n})) = \Cl(K(\sqrt[3]{\alpha}))/(\sigma-1)$.  Now, one has that the character $\chi$ of $\Gal(K(\sqrt[3]{\pi_1}, \ldots, \sqrt[3]{\pi_{n-1}}, \sqrt[3]{\pi_n}))$ given by sending $\sigma \rightarrow \frac{\sigma(\sqrt[3]{\pi_n})}{\sqrt[3]{\pi_n}}$ is equal to the image of $-\frac{a_1 e_1 + \cdots + a_{n-1}e_{n-1} + a e_{\lambda}}{a_n}$, so one gets that 
\[(e_{\lambda}, e_{\lambda}) = -\frac{a_{1}\log\left(\left(\frac{\lambda}{\pi_1}\right)_3\right) + \cdots + a_{1}\log\left(\left(\frac{\lambda}{\pi_n}\right)_3\right)}{a}.\]  

There is now a catch with this analysis: one has that $(e_i, e_{\lambda}) = 0$.  The matrix $M$ then has an $(n-1)$ by $(n-1)$ sub matrix $M'$ in the upper left hand part that is symmetric but still random by the previous discussion.  This matrix looks like this: $$
\mleft(
\begin{array}{ccc|c}
  \ast & \cdots & \ast & 0 \\
  \vdots & & \vdots & \vdots \\
  \ast & \cdots & \ast & 0 \\
  \hline
\ast & \cdots & \ast & \ast
\end{array}
\mright).
$$  The right-hand column of this matrix is all $0$s except the lower right hand corner.  Thus, $M$ has ful rank if and only if $M'$ has full rank and the coefficient $(e_{\lambda}, e_{\lambda})$ is nonzero.\\

This condition has a particularly simple form.  Write $\alpha' = \alpha\zeta_3^{-a}$.  Then $\alpha' \equiv 1 \pmod{\lambda^3}$; this coefficient is $0$ if and only if $\alpha' \equiv 1 \pmod{\lambda^4}$. Thus, one can easily adapt the arguments from before to show that $M$ has full rank $\frac{2\beta}{3}$ of the time. \\

The problem of the upper bound is much more difficult.  One can attempt to construct governing fields as before, but one is no longer able to show that what you want as the governing field is linearly disjoint with $K(\zeta_9)$ over $K$, so we can no longer compute degrees.  We currently see no way around this. \\

Putting this together, we get the following theorem:
\begin{theorem}\label{MainTheorem2}
One has the lower bound
\[\frac{91\beta}{93} \leq \liminf_{X \rightarrow \infty} \frac{|S^{\zeta_3}_{\lambda}(X)|}{|S_{\lambda}(X)|},\]
and one has the upper bound
\[\limsup_{X \rightarrow\infty} \frac{|S_{\lambda}^{\zeta_3}(X)|}{|S_{\lambda}(X)|} \leq \frac{95}{124}.\]
\end{theorem}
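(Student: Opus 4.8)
The plan is to decompose $S_\lambda(X)$ according to the behaviour of $\lambda$ in $K(\sqrt[3]{\alpha})$ and to bound each piece separately. Writing $\alpha=\zeta_3^{a}\lambda^{c}\pi_1^{a_1}\cdots\pi_n^{a_n}$ with the $\pi_i\equiv1\pmod{\lambda^3}$ prime, one has $v_\lambda(D_{K(\sqrt[3]{\alpha})/K})\in\{0,3,4\}$, and this partitions $S_\lambda(X)$ into the sets $S(X)$, $S'(X)$, $S''(X)$ of this section. First I would record the limiting densities $|S(X)|/|S_\lambda(X)|\to 27/31$ and $|S'(X)|/|S_\lambda(X)|\to 2/31$, $|S''(X)|/|S_\lambda(X)|\to 2/31$. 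These follow from a routine count: the three ramification possibilities at $\lambda$, together with the choices of $c$ and of the unit $\zeta_3^{a}$, amount to congruence conditions on $\alpha$ modulo a fixed power of $\lambda$, so they decouple from the sum over the $\pi_i$ and contribute only a fixed Euler factor to the generating Dirichlet series; the discriminant weights $N(\pi_i)^2$ and the bounded $\lambda$-power are then handled by the Tauberian/Selberg--Delange machinery already used in Section~\ref{Analysis}.

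For the pieces $S(X)$ and $S''(X)$ I would quote Theorem~\ref{MainTheorem}. For $S(X)$ this is literally that theorem. For $S''(X)$, where $\lambda$ ramifies in the manner of $K_\lambda(\zeta_9)/K_\lambda$, the point (already made in the text) is that this local extension imposes no obstruction to $\zeta_3$ being a local norm and that the R\'edei-matrix analysis of Sections~\ref{redei}--\ref{Analysis} does not see the behaviour at $\lambda$; hence the same argument gives $\beta$ as a lower bound and $3/4$ as an upper bound for the proportion of $S''(X)$ solving the $\zeta_3$-Pell equation.

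The genuinely new piece is $S'(X)$, the case $v_\lambda=3$. For its lower bound I would run the enlarged-R\'edei-matrix analysis sketched in the text: put $V=\mathbb{F}_3\langle e_1,\dots,e_{n-1},e_\lambda\rangle$, identify it with $\Cl(L)^\vee[\sigma-1]$ through the two genus-field presentations $K(\sqrt[3]{\pi_1},\dots,\sqrt[3]{\pi_{n-1}},\sqrt[3]{\zeta_3})$ and $K(\sqrt[3]{\pi_1},\dots,\sqrt[3]{\pi_{n-1}},\sqrt[3]{\pi_n})$, and read off that the pairing matrix $M$ has the displayed block shape, with $(e_i,e_\lambda)=0$ and with $(e_\lambda,e_\lambda)$ the one genuinely new entry, expressed through cubic symbols $\left(\frac{\lambda}{\pi_i}\right)_3$. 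Thus $M$ is invertible if and only if its $(n-1)\times(n-1)$ block $M'$ is invertible and $(e_\lambda,e_\lambda)\ne0$. Over $S'(X)$ the matrix $M'$ is distributed like a large random symmetric $\mathbb{F}_3$-matrix: the character-sum estimates of Section~\ref{Analysis} (Lemma~\ref{double osc}) and the governing-field input of Theorem~\ref{gov field} carry over, the slot $e_\lambda$ now playing a deterministic rather than a random role, so $M'$ is invertible with limiting probability $\beta$. And $(e_\lambda,e_\lambda)=0$ is equivalent to $\alpha\zeta_3^{-a}\equiv1\pmod{\lambda^4}$, a congruence modulo $\lambda^4$ that is equidistributed and, crucially, asymptotically independent of the cubic-residue conditions defining $M'$, since imposing it multiplies the relevant exponential sums only by a factor of modulus at most $1$. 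Hence $M$ is invertible with limiting probability $\beta\cdot\frac23=\frac{2\beta}{3}$, and Theorem~\ref{PositiveCriterion} converts this into a $\zeta_3$-Pell solution, giving a lower bound of $\frac{2\beta}{3}$ for the proportion of $S'(X)$ that solves the equation. For the upper bound on $S'(X)$ I would use only the trivial bound $1$: as the text explains, the governing-field construction of Theorem~\ref{gov field} fails here because the candidate governing field can no longer be shown to be linearly disjoint from $K(\zeta_9)$ over $K$, so Theorem~\ref{NegativeCondition} cannot be applied systematically.

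Assembling the three pieces, and using that each of the three densities above converges, yields
\[\liminf_{X\to\infty}\frac{|S^{\zeta_3}_\lambda(X)|}{|S_\lambda(X)|}\ \ge\ \frac{27}{31}\beta+\frac{2}{31}\cdot\frac{2\beta}{3}+\frac{2}{31}\beta=\frac{91\beta}{93}\]
and
\[\limsup_{X\to\infty}\frac{|S^{\zeta_3}_\lambda(X)|}{|S_\lambda(X)|}\ \le\ \frac{27}{31}\cdot\frac34+\frac{2}{31}\cdot1+\frac{2}{31}\cdot\frac34=\frac{95}{124},\]
which is the theorem. The main obstacle is the asymptotic-independence step in the $S'(X)$ analysis: one has to redo the oscillation estimates and the Chebotarev/governing-field argument with the extra constraint $\alpha\zeta_3^{-a}\not\equiv1\pmod{\lambda^4}$ inserted, and verify that it separates cleanly both from the pairwise conditions $\left(\frac{\pi_i}{\pi_j}\right)_3=m_{ij}$ among the $\pi_i$ and from the governing-field condition controlling $\dim(\sigma-1)\Cl(L)[(\sigma-1)^2]$; everything else is bookkeeping built on results already in hand.
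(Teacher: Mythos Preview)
Your proposal is correct and follows essentially the same route as the paper: decompose $S_\lambda(X)$ into $S(X)$, $S'(X)$, $S''(X)$ with densities $27/31$, $2/31$, $2/31$; apply Theorem~\ref{MainTheorem} to $S(X)$ and $S''(X)$; for $S'(X)$ use the enlarged R\'edei matrix with its block shape to get the $\tfrac{2\beta}{3}$ lower bound and only the trivial upper bound $1$; then assemble. One small over-citation: for the $S'(X)$ lower bound you only need the character-sum machinery of Lemma~\ref{double osc} to show $M'$ is equidistributed among symmetric matrices --- Theorem~\ref{gov field} is irrelevant there, since it governs $\dim(\sigma-1)\Cl(L)[(\sigma-1)^2]$ and is used only for upper bounds via Theorem~\ref{NegativeCondition}, which you correctly note is unavailable on $S'(X)$.
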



\begin{thebibliography}{10}

\bibitem{CKMP}
S.~Chan, P.~Koymans, D.~Milovic, C.~Pagano, \emph{On the negative Pell equation}, https://arxiv.org/abs/1908.01752 [math.NT].

\bibitem{HB}
D.~R.~Heath-Brown, \emph{Kummer's conjecture for cubic Gauss sums}, Israel.~J.~Math. \textbf{120} (2000), 97-124. 

\bibitem{MV}
H.~L.~Montgomery, R.~C.~Vaughan, \emph{Multiplicative Numnber Theory}, Cambridge University Press, 2006. 

\bibitem{FK1}
E.~Fouvry, J.~Kl\"{u}ners, \emph{On the $4$-rank of class groups of quadratic
number fields}, Invent.~Math. \textbf{167} (2007), 455-513.

\bibitem{FK2}
E.~Fouvry, J.~Kl\"{u}ners, \emph{On the negative Pell equation}, Ann.~of.~Math. \textbf{172} (2010), 2035-2104. 

\bibitem{IK}
H.~Iwaniec, E.~Kowalski, \emph{Analytic Number Theory}, 

\bibitem{Smith1}
A.~Smith, \emph{Governing fields and statistics for 4-Selmer groups and 8-class groups},  arXiv:1607.07860 [math.NT].

\bibitem{Smith2}
A.~Smith, \emph{$2^\infty$-Selmer groups, $2^\infty$-class groups, and Goldfeld's conjecture}, arXiv:1702.02325 [math.NT].

\bibitem{Ste}
P.~Stevenhaegen, \emph{A Density Conjecture for the Negative Pell Equation}, In: Bosma W., van der Poorten A. (eds) Computational Algebra and Number Theory. Mathematics and Its Applications, vol 325. Springer, Dordrecht

\bibitem{SteRedei}
P.~Stevenhagen, \emph{R\'edei Matrices and Applications}, London Math. Soc. Lecture Note Ser., \emph{215}, Cambridge University Press, 1995.

\bibitem{Ten}
G.~Tenenbaum, \emph{Introduction to Analytic and Probabilistic Number Theory}, First Edition, Cambridge Studies in Advanced Mathematics \textbf{46}.





\end{thebibliography}
\end{document}